\newtheorem{theorem}{Theorem}[section]
\newtheorem{proposition}[theorem]{Proposition}
\newtheorem{lemma}[theorem]{Lemma}
\newtheorem{corollary}[theorem]{Corollary}
\theoremstyle{definition}
\newtheorem{example}[theorem]{Example}
\newcommand{\bigzero}{\mbox{\normalfont\Large\bfseries 0}}
\begin{document}

\author[Peter Danchev]{Peter Danchev}
\address{Institute of Mathematics and Informatics, Bulgarian Academy of Sciences, 1113 Sofia, Bulgaria}
\email{danchev@math.bas.bg; pvdanchev@yahoo.com}

\author[A. Javan]{Arash Javan}
\address{Department of Mathematics, Tarbiat Modares University, 14115-111 Tehran Jalal AleAhmad Nasr, Iran}
\email{a.darajavan@modares.ac.ir; a.darajavan@gmail.com}

\author[O. Hasanzadeh]{Omid Hasanzadeh}
\address{Department of Mathematics, Tarbiat Modares University, 14115-111 Tehran Jalal AleAhmad Nasr, Iran}
\email{hasanzadeomiid@gmail.com}

\author[A. Moussavi]{Ahmad Moussavi}
\address{Department of Mathematics, Tarbiat Modares University, 14115-111 Tehran Jalal AleAhmad Nasr, Iran}
\email{moussavi.a@modares.ac.ir; moussavi.a@gmail.com}

\title{{\small Rings with} \large$u-1$ {\small quasinilpotent for each unit} \large$u$}
\keywords{}
\subjclass[2010]{16S34, 16U60}

\maketitle




\begin{abstract}
We define and explore in-depth the notion of {\it UQ rings} by showing their important properties and by comparing their behavior with that of the well-known classes of UU rings and JU rings, respectively. Specifically, among the other established results, we prove that UQ rings are always Dedekind finite (often named directly finite) as well as that, for semipotent rings $R$, the following equivalence hold: $R/J(R)$ is UQ $\iff$ $R$ is UQ having the property that the set $QN(R)$ of quasinilpotent elements of $R$ coincides with the Jacobson radical $J(R)$ of $R$.
\end{abstract}

\section{Introduction and Motivation}

Throughout the present paper, all rings considered are associative and unital. For such a ring $R$, the notations of Jacobson radical, the group of units, the set of nilpotent elements, the set of quasinilpotent elements and the set of all idempotent elements of $R$ are denoted by $J(R)$, $U(R)$, $N(R)$, $QN(R)$ and $Id(R)$, respectively. For any subring $S$ of $R$, we shall almost always assume that $1_R = 1_S$, which means that the subring is unital. If, however, it is {\it not} the case, we hereafter will explicitly indicate this in the text. As usual, the used terminology is standard being in agreement with the classical book \cite{L}.

An element $a$ of a ring $R$ is said to be {\it quasinilpotent} if $1-ax$ is invertible for every $x \in R$ with $xa = ax$. It is thereby clear that both $N(R)$ and $J(R)$ are contained in $QN(R)$. It is also worth noting that quasinilpotents play an important role in the investigation of the complicated structure of a Banach algebra $\mathcal{A}$. By means of quasinilpotents, some interesting concepts were introduced like these: (strongly) J-clean rings \cite{chen2010}, nil-clean rings \cite{diesl2013}, generalized Drazin inverses \cite{koliha1996}, quasipolar rings \cite{chen2012}, etc.

A ring $R$ is called a {\it UU ring} if $U(R) = 1 + N(R)$. This notion was introduced by C\v{a}lug\v{a}reanu \cite{CUU}, and furthermore studied in more details by Danchev and Lam in \cite{DL}. They showed that $R$ is strongly nil-clean if, and only i,f $R$ is an exchange UU ring. In \cite{karimi}, Zhou et al. demonstrated that $R$ is a UU ring if, and only if, every invertible element (i.e., a unit) in $R$ is uniquely nil-clean.

It is clear that $1 + J(R) \subseteq U(R)$. Motivated by this inclusion and the above idea of UU rings, in \cite{Drings} were introduced the concept of a {\it JU ring}, that is, a ring $R$ for which $U(R) = 1 + J(R)$. Further, this notion was also studied in \cite{kosan1}, defined as a {\it UJ ring}. In fact, it was shown there that $R$ is a UJ ring if, and only if, every clean element of $R$ is J-clean. The structural characterization of UJ rings and their potential applications in various aspects of non-commutative ring theory have been examined in several recent studies \cite{Drings,kosan1,kosannot,leroy}.

Inspired by what we have stated so far, we call a ring $R$ a {\it UQ ring} if $U(R) = 1 + QN(R)$. It is evident that UU and UJ rings are both UQ rings, but the converse is manifestly {\it not} generally true. In the current article, we shall provide below a concrete counterexample to substantiate this statement.

Next, we say that an element $a \in R$ is {\it (strongly) quasi nil-clean} if there exists $e \in Id(R)$ such that $a - e \in QN(R)$ (with the additional commuting condition $ea = ae$ in the strongly case). Let us recollect that an element $a \in R$ is said to be {\it generalized strongly Drazin invertible} (or just {\it gs-Drazin invertible} for short) (see, e.g., \cite{Mosic}) if there is an element $x \in R$ satisfying the requirements
$$xax = x, ax = xa, \textrm{and} a-ax \in QN(R).$$
Similarly, a ring $R$ is called {\it gs-Drazin invertible} if each its element is gs-Drazin invertible. In \cite{Gurgun}, G\"urg\"un showed that an element $a \in R$ is strongly quasi nil-clean if, and only if, $a$ is gs-Drazin invertible. Later on, in \cite{CM} it was illustrated that a Banach algebra $\mathcal{A}$ is gs-Drazin invertible if, and only if, for all $a \in A$, $a-a^2 \in QN(A)$.

Our motivating tool is to give a satisfactory description of UQ rings by comparing their crucial properties with these of UU and JU rings, respectively, as well as to find some new exotic properties of UQ rings that are not too characteristically frequently seen in the existing literature. 

Our organization to achieve the desired establishments is subsumed in the following manner: In the next second section, we list our examples and individual properties of UQ rings. In the subsequent third section, among the other obtained results, we establish two important theorems, namely Theorems~\ref{thm 2 primal} and \ref{semipotent}, as well as the interesting Lemma~\ref{dedkind finite}. In the next fourth section, we study when groups rings are UQ and thus we prove the enclosed Theorem~\ref{UQgroupring}. Moreover, we somewhat reverse it by proving Proposition~\ref{locallyfinite} and, in this direction, we reconfirm once the recognizability of Theorem~\ref{UJgroupring} which was originally obtained in \cite{kosangroup}. In the next final fifth section, we show the abundance of some more unexpected classes of UQ rings which depend on the complicated structure of certain matrices.

\section{Examples and Basic Properties of UQ Rings}

We begin our work with the following three constructions.

\begin{example}\label{first ex}
For any ring R, we have:

(1) $J(R), N(R) \subseteq QN(R)$, but the reverse relation does not necessarily hold. Let $S = M_2(\mathbb{F}_2)$ and $K = \mathbb{F}_2[[x]]$. Take
$A=\begin{pmatrix}
    1 & 1 \\
    1 & 1
\end{pmatrix} \in S$, then $A \notin  J(S)$ but $A \in QN(S)$ (see \cite[Example 4.3]{Wang}). On the other hand, $x \in QN(K)$ but $x \notin N(K)$.

(2) We always have $QN(R) \cap U(R) = \emptyset$ and $QN(R) \cap Id(R) = \emptyset$ (see \cite[Lemma 3.1]{Cui}).

(3) Every UJ ring and every UU ring are UQ rings, but the converse is not necessarily true. Let $S = \mathbb{F}_2\left\langle x,y \mid x^2=0 \right\rangle$, $K = \mathbb{F}_2[[x]]$ and $L=S \times K$. Then $S$ is a UQ ring but not a UJ ring (see \cite[Example 2.2]{kosan1} and \cite[Example 2.5]{DL}). Also, by Corollary \ref{cor five}, $K$ is a UQ ring but not a UU ring, since $1+x \in U(K)$ but $x \notin N(K)$. And $L$ is a UQ ring but neither a UJ nor a UU ring.
\end{example}

We call a subring $S$ of $R$ a {\it good subring} if $U(R) \cap S = U(S)$. Clearly, $R$ is a good subring of $R[x]$ and $R[[x]]$. However, $R[x]$ is obviously {\it not} necessarily a good subring of $R[[x]]$, because $1+x \in U(\mathbb{Z}[[x]]) \cap \mathbb{Z}[x]$, but $1+x \notin U(\mathbb{Z}[x])$.

\medskip

We continue our considerations with a series of technicalities needed for our successful presentation in the sequel.

\begin{lemma} \label{good subring}
Let $S$ be a good subring of $R$. Then $ QN(R) \cap S \subseteq QN(S)$.
\end{lemma}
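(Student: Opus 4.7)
The plan is short: the hypothesis ``good subring'' is precisely what is needed to transfer inverses from $R$ down to $S$, and the definition of quasinilpotence is a statement about inverses of elements of the form $1-ax$, so the proof should be almost immediate.

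I would start by fixing $a \in QN(R) \cap S$ and showing $a \in QN(S)$ directly from the definition. So I pick an arbitrary $s \in S$ with $sa = as$, and I must check that $1 - as$ is a unit in $S$. First I would observe that $s$ is also an element of $R$ that commutes with $a$, so by the assumption $a \in QN(R)$ the element $1 - as$ is invertible in $R$, i.e. $1 - as \in U(R)$.

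Next I would note that $1 - as \in S$: this uses that $S$ is unital in the sense standing convention of the paper ($1_R = 1_S$), together with closure of $S$ under multiplication and subtraction. Hence $1 - as \in U(R) \cap S$. Finally, the hypothesis that $S$ is a good subring of $R$ gives $U(R) \cap S = U(S)$, so $1 - as \in U(S)$, as required. Since $s \in S$ commuting with $a$ was arbitrary, we conclude $a \in QN(S)$.

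I do not anticipate any real obstacle; the only subtlety is remembering to invoke the convention $1_R = 1_S$ so that ``$1 - as \in S$'' is valid, and the only ring-theoretic content is the equality $U(R) \cap S = U(S)$ supplied by the definition of a good subring. Thus the entire argument is one short paragraph, and no auxiliary lemma or construction is needed.
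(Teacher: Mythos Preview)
Your proof is correct and is exactly the natural argument; the paper itself states this lemma without proof, so your one-paragraph verification from the definitions (using $1_R=1_S$ to get $1-as\in S$ and then $U(R)\cap S=U(S)$ to pull the inverse down) is precisely what the authors evidently intended the reader to supply.
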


\begin{lemma} \label{subring}
Let $S$ be a good subring of $R$, so if $R$ is a UQ ring. Then $S$ is also a UQ ring.
\end{lemma}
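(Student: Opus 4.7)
The plan is to prove the double inclusion $U(S) = 1 + QN(S)$, exploiting the two hypotheses (the ``good subring'' property and $R$ being UQ) together with the already-stated Lemma~\ref{good subring}.

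For the inclusion $U(S) \subseteq 1 + QN(S)$, I would start with an arbitrary $u \in U(S)$. Since $S$ is a good subring, $u \in U(R) \cap S = U(S)$ tells me also that $u \in U(R)$. Applying the UQ hypothesis on $R$, I can write $u = 1 + q$ for some $q \in QN(R)$. Because $1 \in S$ (the subring is unital) and $u \in S$, the element $q = u - 1$ lies in $S$, hence in $QN(R) \cap S$. Lemma~\ref{good subring} then places $q$ in $QN(S)$, so $u \in 1 + QN(S)$, as required.

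For the reverse inclusion $1 + QN(S) \subseteq U(S)$, I would use only the definition of a quasinilpotent element internal to $S$: if $q \in QN(S)$, then $1 - qx$ is a unit of $S$ for every $x \in S$ that commutes with $q$. Choosing $x = -1$ (which trivially commutes with $q$ and lies in $S$), I conclude that $1 + q = 1 - q(-1) \in U(S)$. This direction does not even require the ``good subring'' or UQ hypotheses; it holds in any ring and essentially recovers the easy inclusion $1 + QN(S) \subseteq U(S)$ that was already implicit in Example~\ref{first ex}(2).

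I do not anticipate a substantive obstacle: the entire argument is a direct unwinding of the definitions once Lemma~\ref{good subring} is in hand. The only point requiring a moment's care is making sure the ``good subring'' hypothesis is used precisely where it is needed, namely to lift a unit of $S$ to a unit of $R$ (so that the UQ property of $R$ can be invoked) and, simultaneously, to ensure that the quasinilpotent part pulled back from $R$ lands inside $S$ so that Lemma~\ref{good subring} can be applied.
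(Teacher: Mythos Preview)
Your argument is correct and follows exactly the paper's approach: take $u \in U(S) \subseteq U(R) = 1 + QN(R)$, conclude $u - 1 \in QN(R) \cap S$, and invoke Lemma~\ref{good subring} to land in $QN(S)$. The paper omits the reverse inclusion $1 + QN(S) \subseteq U(S)$ since it holds trivially in any ring, but your inclusion of it does no harm.
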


\begin{proof}
Let $u \in U(S)\subseteq U(R)=1 + QN(R)$, thus $u - 1 \in QN(R) \cap S \subseteq QN(S)$.
\end{proof}

\begin{lemma} \label{prod}
Let $(R_i)_{i\in I}$ be a family of rings for some index set $I$. Then $QN ( \prod_{i \in I}  R_i )$  $= \prod_{i \in I} QN( R_i )$.
\end{lemma}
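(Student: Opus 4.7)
The plan is to prove the set equality by showing both inclusions component by component, using only the definition of quasinilpotence and the well-known fact that a tuple $(u_i)\in\prod_{i\in I} R_i$ is a unit if and only if each $u_i$ is a unit in $R_i$.

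For the inclusion $QN(\prod_{i\in I} R_i)\subseteq \prod_{i\in I} QN(R_i)$, I would fix $a=(a_i)\in QN(\prod R_i)$ and, for each index $j\in I$, show $a_j\in QN(R_j)$. Given $y\in R_j$ with $y a_j=a_j y$, I would lift $y$ to the element $x=(x_i)\in\prod R_i$ defined by $x_j=y$ and $x_i=0$ for $i\neq j$. A trivial componentwise check shows $xa=ax$ in the product (the $i\neq j$ components are $0=0$, and the $j$-th component is the hypothesis). Since $a$ is quasinilpotent in the product, $1-ax$ is a unit in $\prod R_i$, and looking at the $j$-th coordinate gives that $1-a_j y$ is a unit in $R_j$.

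For the reverse inclusion, I would take $a=(a_i)$ with each $a_i\in QN(R_i)$ and an arbitrary $x=(x_i)\in\prod R_i$ commuting with $a$. Commutation in the product is equivalent to $x_i a_i=a_i x_i$ for every $i$, so the hypothesis on each $a_i$ gives that $1-a_i x_i\in U(R_i)$ for all $i$, which is exactly the condition for $1-ax$ to be a unit in $\prod R_i$.

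I do not expect any real obstacle here; the entire argument rests on two completely routine observations, namely that units in a direct product are exactly componentwise units, and that setting all but one coordinate to zero produces an element of the product that commutes with $a$ as soon as the chosen coordinate commutes with $a_j$. The only point that requires a brief word is the legitimacy of using $x_i=0$ in slots other than $j$ when testing quasinilpotence, which is immediate because $0$ commutes with everything. I would therefore keep the write-up short, a few lines per inclusion, with no need for auxiliary lemmas.
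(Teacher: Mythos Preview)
Your proof is correct and follows essentially the same approach as the paper: both inclusions are verified componentwise using the fact that units in a direct product are exactly the componentwise units. Your write-up is in fact a bit more careful in the forward direction, since you explicitly pad the test element with zeros in the other coordinates, whereas the paper leaves this lift implicit.
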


\begin{proof}
Let $(a_i) \in QN(\prod_{i \in I} R_i)$ and assume that a$_ib_i = b_ia_i$. Therefore, $(a_i)(b_i) = (b_i)(a_i)$, implying that $1 - (a_i)(b_i) \in U(\prod_{i \in I} R_i)$. This implies that $1 - a_ib_i \in U(R_i)$. Hence, for every $i \in I$, we have $a_i \in QN(R_i)$.

Conversely, let $(a_i) \in \prod_{i \in I} QN(R_i)$ and assume that $(a_i)(b_i) = (b_i)(a_i)$. Therefore, $a_ib_i = b_ia_i$, which implies that $1 - a_ib_i \in U(R_i)$. This in turn implies that $1 - (a_i)(b_i) \in U(\prod_{i \in I} R_i)$. Hence, $(a_i) \in QN(\prod_{i \in I} R_i)$.
\end{proof}

\begin{lemma}\label{product}
A direct product $\prod_{i\in I} R_i$ of rings is UQ if and only if each $R_i$ is UQ.
\end{lemma}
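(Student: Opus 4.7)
The plan is to reduce the statement directly to the two ingredients already established: that units of a direct product are coordinatewise units, i.e.\ $U(\prod_{i\in I} R_i)=\prod_{i\in I} U(R_i)$, and Lemma~\ref{prod}, which gives $QN(\prod_{i\in I} R_i)=\prod_{i\in I} QN(R_i)$. Given these two identifications, the UQ condition $U(R)=1+QN(R)$ for $R=\prod_{i\in I} R_i$ translates into a purely coordinatewise statement, and the equivalence should fall out by a straightforward argument.

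For the forward direction, assume $R=\prod_{i\in I} R_i$ is UQ and fix some $j\in I$. Given any $u_j\in U(R_j)$, form the element $u\in R$ whose $j$-th coordinate is $u_j$ and whose other coordinates are $1$. Then $u\in U(R)$, so by hypothesis $u=1+q$ for some $q\in QN(R)$. Applying Lemma~\ref{prod}, the $j$-th coordinate $q_j$ of $q$ lies in $QN(R_j)$, and reading off the $j$-th coordinate of $u=1+q$ gives $u_j\in 1+QN(R_j)$. The reverse containment $1+QN(R_j)\subseteq U(R_j)$ is automatic: if $q_j\in QN(R_j)$ then $q_j$ commutes with $-1\in R_j$, so by the very definition of quasinilpotence $1-q_j(-1)=1+q_j\in U(R_j)$.

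For the converse, suppose each $R_i$ is UQ and let $u=(u_i)\in U(R)=\prod_{i\in I} U(R_i)$. For each $i$ we have $u_i=1+q_i$ with $q_i\in QN(R_i)$, so $u=1+(q_i)$, and by Lemma~\ref{prod} the element $(q_i)$ lies in $QN(R)$. The other inclusion $1+QN(R)\subseteq U(R)$ follows exactly as in the previous paragraph.

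There is no real obstacle here; the whole argument is bookkeeping once one has Lemma~\ref{prod} in hand. The only place one could slip is in verifying the containment $1+QN(R)\subseteq U(R)$ for an arbitrary ring (so that the desired equality $U(R)=1+QN(R)$ is not merely an inclusion), but this follows immediately from the definition of $QN$ applied with $x=-1$.
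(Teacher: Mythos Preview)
Your proof is correct and follows essentially the same approach as the paper: both rely on the coordinatewise description of units together with Lemma~\ref{prod} to reduce the UQ condition to a coordinatewise statement. The paper's proof is much terser (one sentence), while you spell out both directions and the containment $1+QN(R)\subseteq U(R)$ explicitly, but the underlying idea is identical.
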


\begin{proof}
We know that $(a_i) \in U(\prod R_i)$ exactly when $a_i \in U(R_i)$, and with the aid of Lemma \ref{prod} we derive $QN(\prod R_i)= \prod QN(R_i)$, as required.
\end{proof}

For a subring $S$ of a ring $R$, the set $$\mathcal{R}[R, S] := \{(r_1, \ldots , r_n, s, s, \ldots) : r_i \in R, s \in S, n \ge  1\},$$ with addition and multiplication defined componentwise, is a ring.

As an immediate consequence, we yield:

\begin{corollary}
The ring $\mathcal{R}[R, S]$ is UQ ring if, and only if, $R$ and $S$ are UQ rings.
\end{corollary}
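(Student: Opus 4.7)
My plan is to combine Lemmas~\ref{prod}--\ref{product} with a direct coordinatewise verification. I would first record two preliminary facts: (a) an eventually constant tuple $u = (r_1, \dots, r_n, s, s, \dots)$ is a unit in $\mathcal{R}[R,S]$ exactly when each $r_i \in U(R)$ and $s \in U(S)$, since the componentwise inverse $(r_1^{-1},\dots,r_n^{-1},s^{-1},s^{-1},\dots)$ lies in $\mathcal{R}[R,S]$ iff $s^{-1} \in S$; and (b) because $S$ is a unital subring of $R$, $U(S) \subseteq U(R)$.

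For the ``only if'' direction I would exhibit $R$ and $S$ as good subrings of $\mathcal{R}[R,S]$ and apply Lemma~\ref{subring}. The subring $R_1 := \{(r,1,1,\dots) : r \in R\} \cong R$ satisfies $U(\mathcal{R}[R,S]) \cap R_1 = U(R_1)$ by (a), since $1 \in U(S)$ automatically. The diagonal $S_\infty := \{(s,s,\dots) : s \in S\} \cong S$ likewise satisfies $U(\mathcal{R}[R,S]) \cap S_\infty = U(S_\infty)$ by (a) and (b). Lemma~\ref{subring} then forces both $R$ and $S$ to be UQ.

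For the ``if'' direction, take $u = (r_1, \dots, r_n, s, s, \dots) \in U(\mathcal{R}[R,S])$ and set $a = u - 1$. By (a) and (b), each $r_i \in U(R)$ and $s \in U(R) \cap U(S)$, so UQ of $R$ yields $r_i - 1, s - 1 \in QN(R)$, while UQ of $S$ yields $s - 1 \in QN(S)$. Given an arbitrary $b = (b_1, \dots, b_m, t, t, \dots) \in \mathcal{R}[R,S]$ commuting with $a$, I would check coordinatewise with $k = \max(n,m)$: for each $i \leq k$ the entry $a_i$ is either $r_i - 1$ or $s - 1$, both in $QN(R)$ and commuting with $b_i \in R$, so $1 - a_i b_i \in U(R)$; at the tail $i > k$, $s - 1 \in QN(S)$ commutes with $t \in S$, so $1 - (s-1)t \in U(S)$. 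Fact (a) then delivers $1 - ab \in U(\mathcal{R}[R,S])$, hence $a \in QN(\mathcal{R}[R,S])$.

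The main obstacle, and the reason one needs the UQ hypothesis on $R$ in addition to that on $S$, is the ``mixed'' range $n < i \leq m$ that arises whenever $b$'s $R$-prefix is longer than $u$'s: here $a_i = s - 1$ sits in $S$, but $b_i$ is an arbitrary element of $R$. The QN-property of $s-1$ \emph{inside} $S$ alone does not force $1 - (s-1)b_i \in U(R)$; the point is precisely that $s \in U(S) \subseteq U(R)$ combined with UQ of $R$ upgrades $s-1$ to lie in $QN(R)$, after which the commuting $b_i \in R$ yields the required unit and the verification proceeds uniformly across all coordinates.
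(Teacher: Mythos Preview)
Your ``if'' direction is correct, and the coordinatewise verification you give is precisely the argument the paper intends when it calls the corollary an immediate consequence of Lemmas~\ref{prod} and~\ref{product}. Your treatment of $S$ in the ``only if'' direction via the diagonal $S_\infty$ is also fine: $S_\infty$ really is a unital subring, and your unit check (a) shows it is good.

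There is, however, a genuine gap in your handling of $R$. The set $R_1 := \{(r,1,1,\dots) : r \in R\}$ is \emph{not} a subring of $\mathcal{R}[R,S]$: it is not closed under addition, since $(r,1,1,\dots)+(r',1,1,\dots) = (r+r',2,2,\dots)$, which lies in $R_1$ only if $2=1$ in $S$. So Lemma~\ref{subring} cannot be invoked for $R_1$. The repair is immediate: take the idempotent $e = (1,0,0,\dots) \in \mathcal{R}[R,S]$ (legitimate because $0 \in S$) and note that $e\,\mathcal{R}[R,S]\,e = \{(r,0,0,\dots) : r \in R\} \cong R$; then Lemma~\ref{corner} gives that $R$ is UQ. Alternatively, argue directly: for $u \in U(R)$ the element $(u,1,1,\dots)$ is a unit in $\mathcal{R}[R,S]$, so $(u-1,0,0,\dots) \in QN(\mathcal{R}[R,S])$; testing against $(b,0,0,\dots)$ for any $b \in R$ with $b(u-1)=(u-1)b$ forces $1-(u-1)b \in U(R)$ by your fact (a), whence $u-1 \in QN(R)$.
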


\begin{lemma}\label{corner}
Let $R$ be a UQ ring and $e \in Id(R)$. Then, the corner subring $eRe$ is a UQ ring.
\end{lemma}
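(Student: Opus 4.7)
The plan is to verify both inclusions of $U(eRe) = e + QN(eRe)$, keeping in mind that the identity of $eRe$ is $e$. The inclusion $e + QN(eRe) \subseteq U(eRe)$ would follow immediately from the definition of quasinilpotence applied within $eRe$: for $a \in QN(eRe)$, taking $x = -e \in eRe$ (which commutes with $a$) gives $e - a(-e) = e + a \in U(eRe)$.

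For the reverse inclusion, let $u \in U(eRe)$ with two-sided inverse $v \in eRe$. My first step would be to extend $u$ to a unit of the ambient ring $R$ by setting $\widetilde{u} := u + (1-e)$: using $ue = eu = u$ and $ve = ev = v$, a direct computation shows that $v + (1-e)$ is a two-sided inverse of $\widetilde{u}$ in $R$. Since $R$ is UQ by hypothesis, this delivers $u - e = \widetilde{u} - 1 \in QN(R)$.

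The remaining task is to promote $u - e$ from $QN(R)$ to $QN(eRe)$. Given any $x \in eRe$ with $x(u-e) = (u-e)x$, the same commutation holds in $R$, so the quasinilpotence of $u-e$ in $R$ yields $1 - (u-e)x \in U(R)$. The main obstacle is transferring this unit in $R$ into a unit $e - (u-e)x$ inside $eRe$; setting $r := -(u-e)x$, I note that $r \in eRe$ so the Peirce decomposition with respect to $e$ and $1-e$ gives the block-diagonal form $1 + r = (e + r) + (1-e)$. The equations $(1+r)s = s(1+r) = 1$ satisfied by its inverse $s \in R$ then split into Peirce pieces which force the off-diagonal components of $s$ to vanish, force $(1-e)s(1-e) = 1-e$, and force the $(e,e)$-corner $ese$ to be a two-sided inverse of $e + r$ inside $eRe$. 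Hence $e - (u-e)x \in U(eRe)$, so $u - e \in QN(eRe)$ and $u \in e + QN(eRe)$, as desired. Apart from this Peirce bookkeeping, everything else is a routine application of the hypotheses and the earlier lemmas of this section.
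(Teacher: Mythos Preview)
Your proof is correct and follows the same overall strategy as the paper: extend $u\in U(eRe)$ to the unit $u+(1-e)\in U(R)$, apply the UQ hypothesis to get $u-e\in QN(R)$, and then descend to $QN(eRe)$. The only substantive difference is in the last step. The paper dispatches the inclusion $QN(R)\cap eRe\subseteq QN(eRe)$ by citing an external result (\cite[Lemma 3.5]{chen2012}), whereas you prove it from scratch via the Peirce decomposition of the inverse $s$ of $1+r$: from $(1-e)(1+r)=(1+r)(1-e)=1-e$ you deduce $(1-e)s=s(1-e)=1-e$, which forces the off-diagonal Peirce components of $s$ to vanish and makes $ese$ a two-sided inverse of $e+r$ in $eRe$. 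Your route is self-contained and makes the passage from $U(R)$ to $U(eRe)$ explicit; the paper's is shorter but relies on the literature.
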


\begin{proof}
Let $u \in U(eRe)$ with inverse $v$. Therefore, $u + (1 - e) \in U(R)$ with inverse $v + (1 - e)$, so $u + (1 - e) \in 1 + QN(R)$ and $u - e \in QN(R) \cap eRe = QN(eRe)$ by \cite[Lemma 3.5]{chen2012}. Hence, $u \in e + QN(eRe) = 1_{eRe} + QN(eRe)$, and thus $eRe$ is a UQ ring, as claimed.
\end{proof}

Let $R$ be a ring and $M$ a bimodule over $R$. The trivial extension of $R$ and $M$ is defined as
\[ T(R, M) = \{(r, m) : r \in R \text{ and } m \in M\}, \]
with addition defined componentwise and multiplication defined by the equality
\[ (r, m)(s, n) = (rs, rn + ms). \]
Observe that the trivial extension $T(R, M)$ is isomorphic to the subring
\[ \left\{ \begin{pmatrix} r & m \\ 0 & r \end{pmatrix} : r \in R \text{ and } m \in M \right\} \]
consisting of the formal $2 \times 2$ matrix ring $\begin{pmatrix} R & M \\ 0 & R \end{pmatrix}$, and also $T(R, R) \cong R[x]/\left\langle x^2 \right\rangle$.

\medskip

A Morita context is defined as a 4-tuple $\begin{pmatrix} A & M \\ N & B \end{pmatrix}$, where $A$ and $B$ are rings, $_AM_B$ and $_BN_A$ are bimodules, and there exist context products $M\times N \to A$ and $N\times M \to B$ written multiplicatively as $(w, z) = wz$ and $(z, w) = zw$. The Morita context $\begin{pmatrix} A & M \\ N & B \end{pmatrix}$ forms an associative ring with the usual matrix operations.

A Morita context is referred to as trivial if the context products are trivial, meaning that $MN = 0$ and $NM = 0$ (see \cite[p. 1993]{Mari}). In this case, we have the isomorphism
$$\begin{pmatrix} A & M \\ N & B \end{pmatrix} \cong T(A \times B, M\oplus N),$$
where $\begin{pmatrix} A & M \\ N & B \end{pmatrix}$ represents a trivial Morita context as stated in \cite{Kosan2015tri}.

The following statement represents some critical set-theoretical inclusions.

\begin{lemma}\label{basic property}
Let $R$, $S$ be rings, $N$ be an $(R, S)$-bimodule, and $M$ a bimodule over $R$. Then we have:

(1) $\{(r, m)\in T(R, M)  : r \in QN(R) \text{ and } m \in M\} \subseteq QN(T(R, M))$.

(2) $\{\begin{pmatrix} r & m \\ 0 & s \end{pmatrix}: r \in QN(R), s \in QN(S) \text{ and } m \in N\} \subseteq QN(\begin{pmatrix} R & N \\ 0 & S \end{pmatrix})$.

(3) $\{(a_{ij}) \in T_n(R) : a_{ii} \in QN(R) \text{ for all } 1 \le i \le n \} \subseteq QN(T_n(R))$.

(4) $\{a_0+ \cdots a_{n-1}x^{n-1} \in R[x]/\left\langle x^n \right\rangle : a_0 \in QN(R)\} \subseteq QN(R[x]/\left\langle x^n \right\rangle)$.

(5) $\{a_0+ a_1x+ a_2x^2 +\cdots \in R[[x]] : a_0 \in QN(R)\} \subseteq QN(R[[x]]$.

In the above relations, equality holds when $R$ and $S$ are UQ rings.
\end{lemma}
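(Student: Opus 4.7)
The plan is to prove each forward inclusion in (1)--(5) by a uniform diagonal-coordinate argument, and then to obtain each equality under the UQ hypothesis from the single observation that $QN(R) = 1 - U(R)$ in any UQ ring $R$.

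For the forward direction, I would take the representative of (1): an element $(r,m) \in T(R,M)$ with $r \in QN(R)$. Let $(s,n) \in T(R,M)$ commute with it; comparing first coordinates yields $rs = sr$, and since $r \in QN(R)$ this forces $1 - rs \in U(R)$. Because units of the trivial extension are exactly the pairs with unit first coordinate, $1 - (r,m)(s,n) = (1 - rs,\, -rn - ms)$ is then a unit, giving $(r,m) \in QN(T(R,M))$. The same template handles (2)--(5): compare the distinguished entries (diagonal entries of a triangular matrix, constant term of a power series or polynomial) under the commutation relation; use quasinilpotence of those entries to make $1 - a_{ii}b_{ii}$ (or $1 - a_0 b_0$) a unit in $R$; and invoke the standard facts that an upper triangular matrix is a unit iff each diagonal entry is, and that an element of $R[x]/\langle x^n\rangle$ or $R[[x]]$ is a unit iff its constant term is.

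For the equality direction I would first record two facts that hold in any ring $R$. Plugging the commuting element $x = 1$ into the definition of quasinilpotence yields $QN(R) \subseteq 1 - U(R)$; and $QN(R)$ is closed under negation, because if $a \in QN(R)$ and $x$ commutes with $-a$, then $-x$ commutes with $a$, so $1 - a(-x) = 1 + ax \in U(R)$. Combining these, whenever $R$ is UQ we have
\[
1 - U(R) \;=\; 1 - \bigl(1 + QN(R)\bigr) \;=\; -QN(R) \;=\; QN(R).
\]
Now take any $a \in QN$ of one of the rings on the right-hand sides of (1)--(5). Setting the commuting test element equal to $1$ shows $1 - a$ is a unit in the ambient ring, and the unit descriptions recalled above force the distinguished coordinate(s) of $1 - a$ -- the first coordinate in (1), the two diagonal entries in (2), every diagonal entry in (3), and the constant term in (4) and (5) -- to be units in $R$ (and in $S$ for (2)). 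The UQ hypothesis on $R$ (and on $S$) then places the corresponding coordinate(s) of $a$ in $QN(R)$ (and in $QN(S)$), which is exactly the required containment.

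The main obstacle is purely bookkeeping: verifying the ``unit iff unit on the diagonal / unit constant term'' descriptions in each of the five ambient rings. These are standard -- one builds the inverse recursively along the diagonal in the triangular cases, and uses the geometric series $(1 + f)^{-1} = \sum_{k \geq 0} (-f)^k$ once $f$ is nilpotent or has no constant term in the polynomial-quotient and power-series cases -- so with these descriptions in hand both directions of each equality collapse to the one-line computation above.
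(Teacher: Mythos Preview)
Your argument is correct, and the underlying idea --- that in each of these rings the unit group is detected by the diagonal/constant-term projection, so quasinilpotence is likewise detected there --- is exactly the paper's idea as well. The one methodological difference is in how the five cases are organized. You run the diagonal-coordinate argument afresh in each of (1)--(5); the paper instead proves (1) directly and then cascades: (2) is reduced to (1) via the isomorphism $\begin{pmatrix} R & N \\ 0 & S \end{pmatrix} \cong T(R\times S, N)$ together with $QN(\prod R_i)=\prod QN(R_i)$, (3) is reduced to (2) by writing $T_n(R)$ as $\begin{pmatrix} R & R^{n-1} \\ 0 & T_{n-1}(R) \end{pmatrix}$, and (4) is reduced to (3) by embedding $R[x]/\langle x^n\rangle$ as a good subring of $T_n(R)$; only (5) is done directly as you do. Your uniform treatment is arguably cleaner and avoids invoking the auxiliary product and good-subring lemmas, while the paper's reductions make the logical dependencies among the five constructions explicit. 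For the equality direction your computation $1-U(R)=-QN(R)=QN(R)$ is a nice way to package the argument; the paper does the equivalent step pointwise (e.g.\ ``$1+r\in U(R)=1+QN(R)$, so $r\in QN(R)$'') without isolating it as a separate identity.
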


\begin{proof}
(1) Assuming $(r,m) \in T(QN(R),M)$ and $(s,n)(r,m) = (r,m)(s,n)$, since $r \in QN(R)$ and $rs = sr$, so we must have $1-rs \in U(R)$, it follows that $(1,0) - (r,m)(s,n) = (1-rs, \ast) \in T(U(R),M) = U(T(R,M))$.
Now, if $R$ is a UQ ring, then for every $(r,m) \in QN(T(R,M))$, we have $1+r \in U(R) = 1+QN(R)$, so $r \in QN(R)$.

(2) We know that $\begin{pmatrix} R & N \\ 0 & S \end{pmatrix} \cong T(R \times S, N)$, so point (1) along with Lemma \ref{prod} allow us to see that nothing remains to be proven.

(3) It is sufficient to substitute $S = T_{n-1}(R)$ and $N = R^{n-1}$ in (2).

(4) We know that
$$R[x]/\left\langle x^n \right\rangle \cong \left \{ \begin{pmatrix}
    a_1 & a_2 & a_3 & \cdots & a_{n} \\
    0 & a_1 & a_2& \cdots & a_{n-1} \\
    0& 0& a_1& \cdots & a_{n-2} \\
    \vdots& \vdots& \vdots& \ddots&\vdots \\
    0& 0& 0& \cdots & a_1
\end{pmatrix} \in T_n(R) : a_i \in R\right \}.$$
So, it is clear that $R[x]/\left\langle x^n \right\rangle$ is a good subring of $T_{n}(R)$. Therefore, according to point (3) and Lemma \ref{good subring}, the proof is self-evident. 

Reciprocally, if $R$ is a UQ ring and $$f=a_0 +a_1x+ \cdots a_n \in QN(R[x]/\left\langle x^n \right\rangle),$$ then $1 - a_0 \in U(R) = 1 + QN(R)$, so $a_0 \in QN(R)$.

(5) Assuming $$f=\sum_{i=0}a_ix^i \in QN(R) + R[[x]]x$$ and $$g=\sum_{i=0}b_ix^i \in R[[x]]$$ such that $fg = gf$, we find that $a_0b_0 = b_0a_0$ whence $1 - a_0b_0 \in U(R)$. Consequently, $$1 - fg \in U(R) + R[[x]]x = U(R[[x]]).$$ Thus, $f \in QN(R[[x]])$. Now, if $R$ is a UQ ring and $f \in QN(R[[x]])$, we have $$(1 + a_0) + \sum_{i=1}a_ix^i = 1 + f \in U(R[[x]]) =$$ 
$$U(R) + R[[x]]x = 1 + QN(R) + R[[x]]x.$$ This, in turn, forces that $a_0 \in QN(R)$, as required.
\end{proof}

As a direct consequence, we extract:

\begin{corollary} \label{cor five}
Let $R$, $S$ be rings, $N$ be an $(R, S)$-bimodule, and $M$ a bimodule over $R$. Then we have:

(1) The trivial extension $T(R, M)$ is a UQ ring if and only if $R$ is a UQ ring

(2) The formal triangular matrix ring $\begin{pmatrix} R & N \\ 0 & S \end{pmatrix}$ is a UQ ring if and only if $R$ and $S$ are a UQ ring.

(3) For $n \ge 1$, $T_n(R)$ is a UQ ring if and only if $R$ is a UQ ring.

(4) For $n \ge 1$, $R[x]/\left\langle x^n \right\rangle$ is a UQ ring if and only if $R$ is a UQ ring.

(5) $R[[x]]$ is a UQ ring if, and only if, $R$ is a UQ ring.
\end{corollary}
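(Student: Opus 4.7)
The plan is to derive each of the five equivalences as a direct consequence of Lemma \ref{basic property} combined with the subring/product tools already in hand (Lemmas \ref{subring} and \ref{product}). In every part the ``if'' direction is handled by the corresponding inclusion in Lemma \ref{basic property}, while the converse is handled by exhibiting $R$ (and $S$, where relevant) as a good subring of the larger ring and invoking Lemma \ref{subring}.

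For the $(\Leftarrow)$ direction of each part, I would begin with an arbitrary unit $u$ in the larger ring and use the standard description of its units to isolate the ``diagonal'' or ``constant-term'' component, which by the UQ hypothesis on $R$ (and $S$) has the form $1+q$ with $q\in QN(R)$ (respectively $q\in QN(S)$). The element $u-1$ then lies in the set on the left-hand side of the relevant inclusion of Lemma \ref{basic property}, and that lemma deposits it into $QN$ of the larger ring, which is exactly what is needed.

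For the converse direction, parts (1), (4) and (5) proceed by recognising $R$ as a good subring of the larger ring. In (1), if $(u,0)\in U(T(R,M))$ with inverse $(a,m)$, then componentwise multiplication gives $ua=au=1$, so $u\in U(R)$. In (4) and (5), if a constant $u\in R$ has an inverse $f=b_0+b_1x+\cdots$ in $R[x]/\langle x^n\rangle$ or $R[[x]]$, then coefficient comparison in $uf=1=fu$ yields $ub_0=b_0u=1$, giving $u\in U(R)$. Lemma \ref{subring} then closes these three cases. Part (2) reduces to (1) via the isomorphism $\begin{pmatrix} R & N \\ 0 & S \end{pmatrix}\cong T(R\times S, N)$ already recorded in the trivial Morita context discussion, combined with Lemma \ref{product}. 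Part (3) is an easy induction on $n$ using the block decomposition $T_n(R)\cong\begin{pmatrix} R & R^{n-1} \\ 0 & T_{n-1}(R)\end{pmatrix}$ and applying (2), with trivial base $n=1$.

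No step is genuinely resistant. The only mildly delicate point is the coefficient comparison in (4) and (5), where I need to use both $uf=1$ and $fu=1$ to obtain a \emph{two-sided} inverse of $u$ in $R$ rather than merely a one-sided one; this is routine and produces exactly the good-subring property required to feed into Lemma \ref{subring}.
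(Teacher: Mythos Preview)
Your proposal is correct and follows essentially the same approach as the paper: the paper proves only part (1) explicitly, using Lemma~\ref{subring} for the forward direction (identifying $R\cong T(R,0)$ as a good subring) and Lemma~\ref{basic property} for the converse, and then declares the remaining parts similar. Your treatment simply makes the reductions for (2)--(5) more explicit --- reducing (2) to (1) via the trivial Morita context isomorphism, and (3) to (2) by induction --- but all of this stays within the paper's framework and invokes the same two lemmas.
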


\begin{proof}
We only need to establish (1), as the other cases can be proved similarly. Suppose $T(R,M)$ is a UQ ring, and we can consider $R$ as a good subring of $T(R,M)$ (because, $R \cong T(R,0)$). Therefore, by Lemma \ref{subring}, $R$ is a UQ ring. Now, conversely, if $R$ is a UQ ring and $(r,m) \in U(T(R,M))$, then $r \in U(R)$. Since $R$ is a UQ ring, we have $1-r \in QN(R)$. Thus, by Lemma \ref{basic property}, $(1,0)-(r,m) = (1-r,m) \in T(QN(R),M) \subseteq QN(T(R,M))$.
\end{proof}

\section{Principal Results}

We now come to our chief assertion which gives necessary and sufficient conditions in the following two situations. Standardly, the symbol $N_{\ast}(R)$ stands for the lower nil-radical of a ring $R$. Recall that a ring $R$ is termed {\it $2$-primal}, provided that $N(R)=N_{\ast}(R)$.

\begin{theorem} \label{thm 2 primal}
Let $R$ be a ring. Then, the following two items are true:

(1) $R$ is a $2$-primal ring if, and only if, $QN(R[x]) = N_{\ast}(R)[x]$.

(2) $R$ is a reduced ring if, and only if, $QN(R[x]) = \{0\}$.

\end{theorem}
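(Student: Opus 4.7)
The plan is to exploit the centrality of $x$ in $R[x]$. The easy inclusion $N_{\ast}(R)[x]\subseteq QN(R[x])$ holds with no hypothesis on $R$: since $N_{\ast}(R)$ is the sum of the nilpotent ideals $K$ of $R$, each $K[x]$ is a nilpotent (hence Jacobson-radical) ideal of $R[x]$, so $N_{\ast}(R)[x]\subseteq N_{\ast}(R[x])\subseteq J(R[x])\subseteq QN(R[x])$, where the last containment is Example~\ref{first ex}(1). Only the reverse containment, and the second equivalence, will need the $2$-primal or reduced hypothesis.

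For the reverse inclusion under the $2$-primal hypothesis, I take an arbitrary $f=a_0+a_1x+\cdots+a_nx^n\in QN(R[x])$. Since $x$ is central, $xf=fx$, and the very definition of quasinilpotence forces $1-xf\in U(R[x])$. I then pass to the quotient $\bar R=R/N_{\ast}(R)$: because $R$ is $2$-primal, $N(R)=N_{\ast}(R)$, and this makes $\bar R$ a reduced ring. The image $1-x\bar f$ is a unit of $\bar R[x]$, so the key auxiliary step is to prove that over any reduced ring $S$ one has $U(S[x])=U(S)$. Granting this, $1-x\bar f$ must be a constant in $\bar R[x]$, which forces $\bar a_i=0$ for every $i$, that is, $a_i\in N_{\ast}(R)$ for each $i$, whence $f\in N_{\ast}(R)[x]$.

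The reduced-ring units lemma is where I expect the chief obstacle to lie in the non-commutative setting: one argues that if $fg=1$ in $S[x]$ with $f=\sum_{i=0}^{n}a_ix^i$, $g=\sum_{j=0}^{m}b_jx^j$, and $n\ge 1$, then $a_nb_m=0$; the implication $ab=0\Rightarrow ba=0$ that holds in reduced rings gives $b_ma_n=0$, and multiplying the next relation $a_nb_{m-1}+a_{n-1}b_m=0$ on the right by $a_n$ yields $(a_nb_{m-1})^2=0$, forcing $a_nb_{m-1}=0$; iterating shows $a_nb_j=0$ for every $j$, so $a_ng=0$, and right multiplication by $f=g^{-1}$ gives $a_n=0$, contradicting $\deg f=n$. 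For the converse of part (1), if $QN(R[x])=N_{\ast}(R)[x]$ and $a\in N(R)$, then $a$ is a nilpotent constant polynomial, hence $a\in N(R[x])\subseteq QN(R[x])=N_{\ast}(R)[x]$, forcing $a\in N_{\ast}(R)$; this yields $N(R)=N_{\ast}(R)$, i.e. $R$ is $2$-primal. Finally, part (2) is a direct specialization: if $R$ is reduced, then $R$ is $2$-primal with $N_{\ast}(R)=0$, so (1) delivers $QN(R[x])=\{0\}$; conversely $N(R)\subseteq N(R[x])\subseteq QN(R[x])=\{0\}$ forces $R$ to be reduced.
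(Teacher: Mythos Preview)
Your argument follows essentially the same route as the paper's: both exploit that $x$ is central to force $1-xf\in U(R[x])$ and then analyze the unit group of $R[x]$. The paper invokes Chen's description of $U(R[x])$ over $2$-primal rings \cite{Chenpr}; you instead pass to the reduced quotient $R/N_{\ast}(R)$ and prove directly that $U(S[x])=U(S)$ for reduced $S$, which is exactly the content of Chen's result in the case you need. Your converse for (1) and your derivation of (2) are the same as the paper's (indeed your converse is slightly more direct, since the paper routes through \cite[Proposition~2.7(1)]{Cui} where the bare inclusion $N(R[x])\subseteq QN(R[x])$ already suffices).

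One local repair is needed in your easy inclusion. The lower nilradical $N_{\ast}(R)$ is \emph{not} in general the sum of the nilpotent ideals of $R$; that sum is the Wedderburn radical, which can be strictly smaller (the Baer construction of $N_{\ast}(R)$ is transfinite precisely because one pass of ``add all nilpotent ideals'' may not reach it). Your conclusion $N_{\ast}(R)[x]\subseteq N_{\ast}(R[x])$ is nonetheless correct: for any prime ideal $P$ of $R[x]$ the contraction $P\cap R$ is prime in $R$ (if $A,B$ are ideals of $R$ with $AB\subseteq P\cap R$, then $A[x]\,B[x]=(AB)[x]\subseteq P$, forcing $A[x]\subseteq P$ or $B[x]\subseteq P$), hence $N_{\ast}(R)\subseteq P$ for every such $P$, giving $N_{\ast}(R)[x]\subseteq N_{\ast}(R[x])\subseteq J(R[x])\subseteq QN(R[x])$. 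With this correction the proof stands, and your observation that the easy inclusion requires no hypothesis on $R$ is a mild sharpening of the paper, which proves both inclusions under the $2$-primal assumption.
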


\begin{proof}
(1) Let us assume $R$ is a $2$-primal ring. If $f \in N_{\ast}(R)[x]$, then according to the idealness of $N(R)$, for every $g \in R[x]$ such that $fg = gf$, we have $1 - fg(0) \in U(R)$ and $fg(n) \in N(R)$ for every $n$. Therefore, by \cite[Theorem 2.5]{Chenpr}, we conclude that $1 - fg \in U(R[x])$. Thus, $f \in QN(R[x])$.

Now, if $f \in QN(R[x])$, then $1 - xf \in U(R[x])$ since $xf = fx$. In virtue of \cite[Theorem 2.5]{Chenpr}, one derives that $a_i \in N(R)$ for every $0 \le i \le n$. Therefore, $f \in N(R)[x]=N_{\ast}(R)[x]$.

Conversely, let us assume that $QN(R[x]) = N_{\ast}(R)[x]$. If $a \in N(R)$ and $a^n = 0$, then $a^n \in QN(R[x])$. By virtue of \cite[Proposition 2.7(1)]{Cui}, one infers that $a \in QN(R[x]) = N_*(R)[x]$. Thus, $a \in N_*(R)$.

(2) Based on what we have proved in the first part, nothing remains to be showed.
\end{proof}

As two consequent corollaries, we deduce:

\begin{corollary}\label{cor 2 primal}
Let $R$ be a 2-primal ring. Then, we have:

\[ J(R[x]) = QN(R[x]) = \text{N}(R)[x] = \text{N}_*(R)[x] = \text{N}_*(R[x]) \]
\end{corollary}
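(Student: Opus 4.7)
The plan is to arrange the four asserted equalities as a single loop of set-containments that returns to its starting point, after which all the intermediate sets are forced to coincide. Explicitly, I would aim to establish the chain
\[
J(R[x]) \;\subseteq\; QN(R[x]) \;=\; N(R)[x] \;=\; N_*(R)[x] \;\subseteq\; N_*(R[x]) \;\subseteq\; J(R[x]).
\]
Since every ring satisfies $J \subseteq QN$ (recorded in the introductory discussion of $QN(R)$) and $N_* \subseteq J$ (the prime radical lies in the Jacobson radical in any ring), the first and last inclusions above come for free when applied to the ring $R[x]$. The middle equality $N(R)[x] = N_*(R)[x]$ is immediate from the $2$-primal hypothesis $N(R)=N_*(R)$, and the equality $QN(R[x]) = N_*(R)[x]$ is precisely Theorem~\ref{thm 2 primal}(1), which is already available.

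The one nontrivial step is therefore the inclusion $N_*(R)[x] \subseteq N_*(R[x])$. I would handle this via a short prime-contraction argument: for any prime ideal $P$ of $R[x]$, the contraction $P \cap R$ is a prime ideal of $R$ and hence contains $N_*(R)$. Intersecting over all primes $P$ of $R[x]$ gives $N_*(R) \subseteq N_*(R[x]) \cap R$. Since $N_*(R[x])$ is a two-sided ideal of $R[x]$, multiplying by powers of $x$ and summing extends this to $N_*(R)[x] \subseteq N_*(R[x])$, as desired.

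Once the loop is closed, every containment inside it must be an equality, yielding $J(R[x]) = QN(R[x]) = N(R)[x] = N_*(R)[x] = N_*(R[x])$ in one stroke. The potential obstacle is whether one needs a separate argument showing $N_*(R)[x]$ is quasi-regular in $R[x]$ (which would route through \cite[Theorem~2.5]{Chenpr} as in the proof of Theorem~\ref{thm 2 primal}), but the loop strategy neatly bypasses this: the quasi-regularity is already encoded in the inclusion $N_*(R[x]) \subseteq J(R[x])$, which costs nothing. So no deep new input beyond the excerpt's own Theorem~\ref{thm 2 primal}(1) and the standard facts $J \subseteq QN$ and $N_* \subseteq J$ is required.
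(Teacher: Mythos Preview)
Your loop argument is correct: the contraction of a prime ideal of $R[x]$ to $R$ is indeed prime (even non-commutatively, since $aRb\subseteq P\cap R$ forces $aR[x]b\subseteq P$), so $N_*(R)[x]\subseteq N_*(R[x])$ holds for any ring, and together with the generic containments $J\subseteq QN$ and $N_*\subseteq J$ the circle closes. The paper's own proof proceeds differently: it simply asserts that, \emph{similarly to the proof of Theorem~\ref{thm 2 primal}}, one can show $J(R[x])=N(R)[x]$ directly---i.e., one reruns the Chen unit criterion \cite[Theorem~2.5]{Chenpr} with the Jacobson radical in place of $QN$ (for $f\in J(R[x])$ the element $1-xf$ is a unit, forcing all coefficients into $N(R)$; conversely $N_*(R)[x]$ is quasi-regular by the same criterion). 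Your route is a bit more economical, since it recycles Theorem~\ref{thm 2 primal}(1) rather than repeating its computation, and it delivers the equality $N_*(R)[x]=N_*(R[x])$ as a by-product of the loop rather than needing it separately. The paper's route has the advantage that it never touches $N_*(R[x])$ explicitly and avoids the prime-contraction step, but at the cost of duplicating the unit-criterion argument.
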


\begin{proof}
Similarly to the proof of Theorem \ref{thm 2 primal}, it can be shown that $J(R[x]) = N(R)[x]$.
\end{proof}

\begin{corollary}
Let $R$ be a $2$-primal ring. Then the following conditions are equivalent:

(1) $R$ is a UU-ring.

(2) $R[x]$ is a UQ-ring.

(3) $R[x]$ is a UJ-ring.

(4) $R[x]$ is a UU-ring.
\end{corollary}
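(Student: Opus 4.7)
The plan is to exploit Corollary~\ref{cor 2 primal} as heavily as possible. That corollary already gives
\[ J(R[x]) = QN(R[x]) = N(R)[x] = N_{\ast}(R)[x] = N_{\ast}(R[x]) \]
whenever $R$ is $2$-primal. My first step is to extend this chain of equalities one link further to $N(R[x])$ itself, by using the always-valid inclusion $N(R[x]) \subseteq QN(R[x])$ together with $N_{\ast}(R[x]) \subseteq N(R[x])$; squeezed between the two, $N(R[x])$ must coincide with $QN(R[x]) = N_{\ast}(R[x])$. Consequently,
\[ N(R[x]) = J(R[x]) = QN(R[x]) = N(R)[x], \]
and in particular $R[x]$ is itself $2$-primal. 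Because each of the properties UU, UJ, UQ for $R[x]$ asks precisely that $U(R[x]) - 1$ equals one of these three coinciding sets, this immediately produces $(2) \Leftrightarrow (3) \Leftrightarrow (4)$ with no further work.

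It then remains to show $(1) \Leftrightarrow (4)$. For $(1) \Rightarrow (4)$ the tool is the characterization of units in $R[x]$ over a $2$-primal ring taken from \cite[Theorem~2.5]{Chenpr}, which has already been invoked within the proof of Theorem~\ref{thm 2 primal}: a polynomial $f = a_0 + a_1 x + \cdots + a_n x^n$ lies in $U(R[x])$ if, and only if, $a_0 \in U(R)$ and $a_1, \ldots, a_n \in N(R)$. Assuming $R$ is UU, the constant term satisfies $a_0 \in 1 + N(R)$, so $f - 1 \in N(R)[x] = N(R[x])$ by the first paragraph; thus $R[x]$ is UU. For the reverse direction $(4) \Rightarrow (1)$, I would use that $R$ is a good subring of $R[x]$ (as noted immediately after Example~\ref{first ex}) and run the pattern of Lemma~\ref{subring} adapted to UU: for any $u \in U(R) \subseteq U(R[x]) = 1 + N(R[x])$, the difference $u - 1$ is a constant polynomial sitting inside $N(R[x])$, and a constant polynomial is nilpotent in $R[x]$ precisely when it is nilpotent in $R$, whence $u - 1 \in N(R)$.

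The only step that warrants even a moment of reflection is the extension $N(R[x]) = QN(R[x])$, and that amounts to nothing beyond chaining the universal inclusion $N \subseteq QN$ with Corollary~\ref{cor 2 primal}. There is no genuine obstacle in this argument; once one has bought the corollary and the unit-characterization from \cite{Chenpr}, the statement is essentially bookkeeping.
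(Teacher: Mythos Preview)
Your proof is correct and rests on the same two tools as the paper's: Corollary~\ref{cor 2 primal} and the unit characterization from \cite[Theorem~2.5]{Chenpr}. The organization differs slightly: the paper cites \cite[Theorem~2.11]{karimi} for $(1)\Leftrightarrow(4)$ and then separately argues $(1)\Rightarrow(2)$ and $(2)\Rightarrow(1)$, whereas you first extend the chain in Corollary~\ref{cor 2 primal} by the squeeze $N_{\ast}(R[x])\subseteq N(R[x])\subseteq QN(R[x])$ to get $(2)\Leftrightarrow(3)\Leftrightarrow(4)$ in one stroke, and then handle $(1)\Leftrightarrow(4)$ directly via \cite{Chenpr} and the good-subring observation. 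Your route is marginally more self-contained in that it dispenses with the external citation to \cite{karimi}; otherwise the substance is identical.
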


\begin{proof}
(1) $\Leftrightarrow$ (4). This is immediate directing to \cite[Theorem 2.11]{karimi}.

(2) $\Leftrightarrow$ (3). It follows at once from Corollary \ref{cor 2 primal}.

(1) $\Rightarrow$ (2). Let us assume $f=\sum_{i=0}^{n}a_ix^i \in U(R[x])$. Since $R$ is a $2$-primal ring, exploiting \cite[Theorem 2.5]{Chenpr}, we may write $a_0 \in U(R)$ and $a_i \in N(R)$ for all $1 \le i \le n$. However, since $R$ is a UU ring, we find that $$1 - f = (1 - a_0) + \sum_{i=1}^{n}a_ix^i \in N(R)[x] = QN(R[x]).$$

(2) $\Rightarrow$ (1). Let us assume $u \in U(R)$. Then, we can write that $$u \in U(R[x]) = 1 + QN(R[x]) = 1 + N(R)[x].$$ Therefore, $u - 1 \in \text{Nil}(R)$.
\end{proof}

Our next series of technical claims is as follows:

\begin{lemma} \label{close prod}
Let $R$ be a ring and let $b \in Z(R)$. Then, we have:

(1) If $a \in QN(R)$, then $ab \in QN(R)$.

(2) If $a$ and $b \in QN(R)$, then $a + b \in QN(R)$.
\end{lemma}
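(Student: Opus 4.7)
For part (1), the natural attempt --- conclude from $x(ab) = (ab)x$ that $x$ commutes with $a$ and then apply $a \in QN(R)$ with multiplier $bx$ --- fails, since the centrality of $b$ only gives $b(xa - ax) = 0$ rather than $xa = ax$. However, this same computation secretly provides what we need: using $b \in Z(R)$, the equalities $x(ab) = bxa$ and $(ab)x = bax$ can be rearranged to show that $(bx)a = a(bx)$. So the plan is: assume $x$ commutes with $ab$, massage using centrality of $b$ to obtain that $bx$ commutes with $a$, then invoke $a \in QN(R)$ to get $1 - a(bx) \in U(R)$, and finally note that $a(bx) = (ab)x$ by associativity. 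This gives $1 - (ab)x \in U(R)$, which is exactly what $ab \in QN(R)$ requires.

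For part (2), I would first extract commutativity: from $x(a+b) = (a+b)x$ and $xb = bx$ (by centrality of $b$), one gets $xa = ax$. So $a \in QN(R)$ immediately yields that $1-ax$ is a unit, with inverse $u := (1-ax)^{-1}$. The plan is then to factor
\[
1 - (a+b)x \;=\; (1-ax) - bx \;=\; (1-ax)\bigl(1 - u\,bx\bigr),
\]
and show both factors are units. The first factor is clear; for the second, I need $1 - ubx \in U(R)$. Here centrality of $b$ pays off twice: first, $b$ commutes with $u$, and $x$ commutes with $u$ (because $x$ commutes with $ax$, hence with $1-ax$, hence with its inverse), so I can rewrite $ubx = b(ux)$; second, $ux$ trivially commutes with the central element $b$, and $b \in QN(R)$ gives $1 - b(ux) \in U(R)$.

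The main obstacle, as noted, is the subtlety in part (1) that $x(ab) = (ab)x$ does \emph{not} force $xa = ax$, so one must route the quasinilpotence of $a$ through the element $bx$ rather than $x$ itself. Once that observation is in place, the verification is a short computation with centrality. Part (2) is then a standard ``sum of two units-related elements'' factorization, the only nontrivial point being the bookkeeping that justifies moving $b$ past $u$ and $x$; this is where centrality of $b$ (rather than merely $b \in QN(R)$) is used essentially. An alternative, even shorter route for (2) uses the observation $Z(R) \cap QN(R) \subseteq J(R)$ (any central quasinilpotent $b$ satisfies $1-bx \in U(R)$ for every $x \in R$), which reduces (2) to the fact that units are closed under addition of Jacobson radical elements; I would mention this route as a stylistic variant.
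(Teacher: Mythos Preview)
Your proof is correct and matches the paper's approach: part (1) is identical, and in part (2) you carry out the same factorization idea, merely pulling out the factor $(1-ax)$ first where the paper pulls out $(1+xb)$ first and then applies quasinilpotence of $a$ to the modified commuting element $(1+xb)^{-1}x$. Your alternative route via $Z(R)\cap QN(R)\subseteq J(R)$ is a valid shortcut not mentioned in the paper.
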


\begin{proof}
(1) Assume $x \in R$ such that $x(ab) = (ab)x$. So, $(bx)a = a(bx)$. Since $a \in QN(R)$, we have $1 - a(bx) \in U(R)$, which implies that $ab \in QN(R)$.

(2) Assume $x \in R$ such that $x(a+b) = (a+b)x$. Since $b \in Z(R)$, we have $ax = xa$ and $1 + xb \in U(R)$. Therefore, we can conclude that $((1 + xb)^{-1}x)a=a((1 + xb)^{-1}x)$. This implies that $1 + x(a+b) = (1 + xb)(1 + (1 + xb)^{-1}xa) \in U(R)$. Hence, $a + b \in QN(R)$.
\end{proof}

In the above lemma, the condition $b \in Z(R)$ is surely {\it not} redundant and cannot be removed. To substantiate this, let us consider for instance the setting: $R = M_2(\mathbb{Z})$, $a = e_{12}$, and $b = e_{21}$. According to Example \ref{first ex}(1), we have $a,b \in QN(R)$. However, based on Example \ref{first ex}(2), we see that $ab$ and $a+b$ are not in $QN(R)$. Therefore, it is {\it not} necessarily true that $QN(R)$ is a subring of $R$.

According to \cite[Lemma 1.1]{kosan1}, a ring $R$ is UJ if and only if $U(R) + U(R) = J(R)$. Similarly, we can state the following lemma:

\begin{lemma} \label{equ UQ}
$R$ is a UQ ring if, and only if, $U(R) + (U(R)\cap Z(R)) = QN(R)$
\end{lemma}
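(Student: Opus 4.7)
The plan is to follow the pattern of the UJ analogue \cite[Lemma 1.1]{kosan1}, leaning on a key observation that holds in every ring: the inclusion $1+QN(R) \subseteq U(R)$ is automatic. Indeed, for $q \in QN(R)$ the element $-1$ is central and commutes with $q$, so the defining property of quasinilpotence applied with $x = -1$ yields $1 - q(-1) = 1+q \in U(R)$. Consequently the UQ condition reduces to the one-sided inclusion $U(R) \subseteq 1+QN(R)$, and this is what I will verify in each direction.

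For the $(\Leftarrow)$ implication I will use $-1 \in U(R)\cap Z(R)$: any $u \in U(R)$ gives $u - 1 = u + (-1) \in U(R) + (U(R)\cap Z(R)) = QN(R)$, hence $u \in 1+QN(R)$. Combined with the automatic reverse inclusion, this shows $R$ is UQ.

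For the $(\Rightarrow)$ implication, the containment $QN(R) \subseteq U(R)+(U(R)\cap Z(R))$ is immediate from the decomposition $q = (1+q) + (-1)$, since $1+q \in U(R)$ and $-1 \in U(R)\cap Z(R)$. The harder containment is $U(R)+(U(R)\cap Z(R)) \subseteq QN(R)$. Here, given $u \in U(R)$ and $v \in U(R)\cap Z(R)$, the UQ hypothesis lets me write $u = 1+q_1$ and $v = 1+q_2$ with $q_1,q_2 \in QN(R)$; the centrality of $v$ transfers to $q_2 = v-1$. Thus $u+v = 2 + q_1 + q_2$, and I need to place this in $QN(R)$. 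I will first note that $QN(R)$ is closed under negation, because the commutation condition $xa = ax$ is invariant under $x\mapsto -x$; combined with $-1 \in U(R) = 1+QN(R)$ giving $-2 \in QN(R)$, this yields $2 \in QN(R)\cap Z(R)$. Then Lemma \ref{close prod}(2) applies twice: once with the central summand $q_2$ to give $q_1 + q_2 \in QN(R)$, and once with the central summand $2$ to conclude $u + v = (q_1+q_2) + 2 \in QN(R)$.

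The only mildly delicate points are noticing that the centrality of the unit $v$ propagates to its quasinilpotent summand, and verifying $2 \in QN(R)\cap Z(R)$ so that Lemma \ref{close prod}(2) can be invoked; otherwise everything is a direct unpacking of the UQ hypothesis.
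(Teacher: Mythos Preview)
Your proof is correct and follows essentially the same strategy as the paper's: both directions rest on Lemma~\ref{close prod}(2) together with the observation that $QN(R)$ is closed under negation and that the centrality of $v$ passes to $v-1$. The only cosmetic difference is that the paper uses the decomposition $u+v=(1+u)-(1-v)$, which needs just one invocation of Lemma~\ref{close prod}(2), whereas your decomposition $u+v=2+q_1+q_2$ calls on it twice; the underlying ideas coincide.
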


\begin{proof}
Evidently, $QN(R)$ is always a subset of $U(R) + (U(R)\cap Z(R))$. Now, if $R$ is a UQ ring and $u \in U(R)$ and $v \in U(R) \cap Z(R)$, then $1+u$, $1-v \in QN(R)$. Therefore, by Lemma \ref{close prod}, we have $u+v=(1+u)-(1-v) \in QN(R)$. The reverse is also evident since for every $u \in U(R)$, we have $u-1 \in QN(R)$.
\end{proof}

\begin{lemma}\label{matrix}
For any ring $S \neq 0$ and any integer $n \ge 2$, the matrix ring $M_n(S)$ of size $n$ is {\it not} a UQ ring.
\end{lemma}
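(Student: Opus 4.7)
The plan is to reduce the case $n \ge 2$ to the case $n=2$, and then to exhibit an explicit unit $u \in M_2(S)$ such that $u-1$ is also a unit (hence not quasinilpotent).

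First I would reduce to $n=2$ using Lemma \ref{corner}. For $n\ge 2$, the matrix $e = e_{11}+e_{22}$ is an idempotent of $M_n(S)$ and the corner subring $eM_n(S)e$ is canonically isomorphic to $M_2(S)$. Thus, if $M_n(S)$ were a UQ ring, then Lemma \ref{corner} would force $M_2(S)$ to be a UQ ring as well. So it suffices to refute the UQ property for $M_2(S)$.

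For the core step I would work with the ``Fibonacci'' matrix
$$u = \begin{pmatrix} 1 & 1 \\ 1 & 0 \end{pmatrix} \in M_2(S).$$
A direct computation shows that
$$u\cdot \begin{pmatrix} 0 & 1 \\ 1 & -1 \end{pmatrix} = \begin{pmatrix} 1 & 0 \\ 0 & 1 \end{pmatrix} = \begin{pmatrix} 0 & 1 \\ 1 & -1 \end{pmatrix}\cdot u,$$
so $u \in U(M_2(S))$ with $u^{-1} = u - 1$; equivalently, $u$ satisfies $u^2 = u + 1$. This identity uses only addition, multiplication, and the element $-1$, so it goes through over any ring $S$ whatsoever, independent of characteristic or of whether $S$ is commutative. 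In particular, $u - 1 = u^{-1} \in U(M_2(S))$.

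Finally, invoking Example \ref{first ex}(2), which asserts $QN(R)\cap U(R) = \emptyset$ for every ring $R$, we conclude that $u - 1 \notin QN(M_2(S))$. Hence $u$ is a unit of $M_2(S)$ that does not lie in $1 + QN(M_2(S))$, so $M_2(S)$ is not a UQ ring. Combined with the corner-subring reduction, this yields the lemma for all $n\ge 2$. The only conceptual step is spotting an appropriate ``universal'' unit whose translate by $-1$ is again a unit; once the Fibonacci identity $u^2=u+1$ is in hand, the rest is a routine verification and no obstacles arise.
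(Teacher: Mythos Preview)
Your proof is correct and follows essentially the same approach as the paper: reduce to $M_2(S)$ via the corner subring $eM_n(S)e \cong M_2(S)$ and Lemma~\ref{corner}, then exhibit a unit whose translate by $-1$ is again a unit and hence (by Example~\ref{first ex}(2)) not quasinilpotent. The paper uses $U=\begin{pmatrix}0&1\\1&1\end{pmatrix}$ where your $u=\begin{pmatrix}1&1\\1&0\end{pmatrix}$, but these are trivially interchangeable and your explicit observation that $u^{-1}=u-1$ (from $u^2=u+1$) makes the verification slightly cleaner.
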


\begin{proof}
Since $M_2(S)$ is isomorphic to a corner ring of $M_n(S)$ (for $n \ge 2$), it suffices to show that $M_2(S)$ is not a UQ ring. Consider the matrix
    $U=\begin{pmatrix}
        0 & 1\\1 & 1
    \end{pmatrix},$ where is a unit. Since
    $I-U=\begin{pmatrix}
        1 & -1\\-1 & 0
    \end{pmatrix}$
is a unit too, it cannot be quasinilpotent, so $M_2(S)$ is not a UQ ring, as asserteds.
\end{proof}

A set $\{e_{ij} : 1 \le i, j \le n\}$ of nonzero elements of $R$ is said to be a system of $n^2$ matrix units if $e_{ij}e_{st} = \delta_{js}e_{it}$, where $\delta_{jj} = 1$ and $\delta_{js} = 0$ for $j \neq s$. In this case, $e := \sum_{i=1}^{n} e_{ii}$ is an idempotent of $R$ and $eRe \cong M_n(S)$, where $$S = \{r \in eRe : re_{ij} = e_{ij}r,~~\textrm{for all}~~ i, j = 1, 2, . . . , n\}.$$

\begin{lemma} \label{dedkind finite}
Every UQ ring is Dedekind finite.
\end{lemma}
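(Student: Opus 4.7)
The plan is to argue by contradiction using the non--UQ matrix ring criterion from Lemma~\ref{matrix}, combined with the passage to a corner ring of Lemma~\ref{corner}. Suppose, for contradiction, that $R$ is UQ but not Dedekind finite, so there exist $a,b\in R$ with $ab=1$ and $ba\ne 1$. From $ab=1$ one immediately has the two ``cancellation'' identities
\[
a(1-ba)=a-aba=0\qquad\text{and}\qquad (1-ba)b=b-bab=0,
\]
and $(1-ba)^{2}=1-ba\ne 0$.

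Next I would construct Jacobson's classical system of matrix units by setting $e_{ij}:=b^{\,i-1}(1-ba)a^{\,j-1}$ for $i,j\ge 1$. The relation $e_{ij}e_{kl}=\delta_{jk}e_{il}$ follows by case analysis on $j$ versus $k$: when $j=k$ the middle factor $a^{j-1}b^{k-1}$ collapses via $ab=1$ to $1$, giving $e_{il}$; when $j>k$ (respectively $k>j$) the middle factor reduces to a positive power of $a$ (respectively $b$) which annihilates $1-ba$ on the right (respectively on the left) by the two identities above. Nonzeroness of $e_{ij}$ is obtained by multiplying on the left by $a^{i-1}$ and on the right by $b^{j-1}$, producing $1-ba\ne 0$.

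Now specialize to $n=2$ and set $e:=e_{11}+e_{22}$. A short computation shows $e\ne 0$: otherwise $e_{11}=-e_{22}$ would force, upon squaring, $e_{11}=e_{22}$, hence $2e_{11}=0$, and then $e_{11}=e_{11}e_{11}=e_{11}(-e_{22})=-\,e_{11}e_{22}=0$, contradicting $e_{11}\ne 0$. By the general fact recalled just before the lemma's statement, $eRe\cong M_{2}(S)$, where
\[
S=\{\,r\in eRe : re_{ij}=e_{ij}r\ \text{for all}\ i,j\in\{1,2\}\,\},
\]
and $S\ne 0$ since its identity element is $e\ne 0$.

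Finally I would apply Lemma~\ref{corner} to conclude that the corner $eRe$ inherits the UQ property from $R$, while Lemma~\ref{matrix} asserts that $M_{2}(S)\cong eRe$ is \emph{not} UQ. This contradiction closes the argument. The only slightly delicate point is the verification that $e\ne 0$ and $S\ne 0$, so that Lemma~\ref{matrix} actually applies; the rest is a routine (but careful) bookkeeping of the matrix-unit identities above.
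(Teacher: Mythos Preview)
Your proof is correct and follows essentially the same approach as the paper's: assume $R$ is not Dedekind finite, build the classical system of matrix units from $a,b$ with $ab=1\ne ba$, pass to the corner $eRe\cong M_{2}(S)$, and invoke Lemmas~\ref{corner} and~\ref{matrix} for a contradiction. Your write-up is in fact more careful than the paper's (which only sketches the construction), and your verification that $e\ne 0$ could even be shortened by observing $e_{11}e=e_{11}$ directly.
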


\begin{proof}
If $R$ is not a Dedekind finite ring, then there exist elements $a, b \in R$ such that $ab = 1$ but $ba \neq 1$. Assuming $e_{ij} = a^i(1-ba)b^j$ and $e =\sum_{i=1}^{n}e_{ii}$, there exists a nonzero ring $S$ such that $eRe \cong M_n(S)$. However, owing to Proposition \ref{corner}, $eRe$ is a UQ ring, so $M_n(S)$ must also be a UQ ring, which contradicts proposition \ref{matrix}.
\end{proof}

We, thus, infer the following consequence.

\begin{corollary}
Let $R$ be a UQ ring and let $a, b \in R.$ Then, $1-ab \in QN(R)$ if, and only if, $1-ba \in QN(R)$.
\end{corollary}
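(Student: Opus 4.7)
The plan is to translate the hypothesis $1-ab \in QN(R)$ into the invertibility of $ab$, and then use Dedekind finiteness to swap $ab$ and $ba$.

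First I would observe that $QN(R)$ is closed under negation. Indeed, since $-1 \in Z(R)$, Lemma~\ref{close prod}(1) gives $-q = (-1)q \in QN(R)$ for every $q \in QN(R)$. Combining this with the defining identity $U(R) = 1 + QN(R)$ of a UQ ring, I obtain the translation
\[
x \in U(R) \iff x - 1 \in QN(R) \iff 1 - x \in QN(R)
\]
valid for every $x \in R$. Specializing to $x = ab$ and to $x = ba$, the conclusion of the corollary reduces to the purely unit-theoretic statement $ab \in U(R) \iff ba \in U(R)$.

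For this remaining equivalence I would invoke Lemma~\ref{dedkind finite}, according to which $R$ is Dedekind finite. If $ab \in U(R)$ admits a two-sided inverse $c$, then $a(bc) = 1$ and $(ca)b = 1$, so that $a$ is right-invertible and $b$ is left-invertible; Dedekind finiteness promotes both one-sided inverses to genuine units of $R$, whence $a, b \in U(R)$ and consequently $ba \in U(R)$. The reverse implication holds by symmetry, and chaining the equivalences yields the claim.

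The only step with any real content is the negation closure of $QN(R)$, which is essentially already contained in Lemma~\ref{close prod} via the centrality of $-1$; accordingly I do not foresee any genuine obstacle in the argument.
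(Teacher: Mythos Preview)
Your argument is correct and follows the same arc as the paper's: pass from $1-ab\in QN(R)$ to $ab\in U(R)$, then invoke Lemma~\ref{dedkind finite} (Dedekind finiteness) to make $a$ a unit. The divergence is only in the closing step. The paper observes that $a^{-1}(1-ab)a = 1-ba$ and appeals to an outside result stating that $QN(R)$ is closed under conjugation by units, so $1-ba\in QN(R)$ follows directly. You instead squeeze a little more out of Dedekind finiteness to obtain both $a,b\in U(R)$, hence $ba\in U(R)$, and then re-apply the UQ identity $U(R)=1+QN(R)$ together with your negation-closure remark to conclude. Your route stays entirely within the paper's own lemmas and avoids the external citation; the paper's conjugation trick, in exchange, does not need to invoke the UQ hypothesis a second time once $a$ is known to be a unit.
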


\begin{proof}
Assuming that $1-ab \in QN(R)$, we have $ab \in U(R)$. Therefore, invoking Lemma \ref{dedkind finite}, $a \in U(R)$. Since $1-ab=(1-ab)aa^{-1} \in QN(R)$, it follows from [2, Proposition 3] that $a^{-1}(1-ab)a \in QN(R)$. Hence, $1-ba=a^{-1}(1-ab)a \in QN(R)$, as needed.
\end{proof}

\begin{lemma} \label{sum two unit}
Let $R$ be a UQ ring and let $\overline{R} = R/J(R)$. The following hold:

(1) For any $u_1, u_2 \in U(R)$, $u_1 + u_2 \neq 1$.

(2) For any $\bar{u}_1, \bar{u}_2 \in U(\overline{R})$, $\bar{u}_1 + \bar{u}_2 \neq \bar{1}$.

\end{lemma}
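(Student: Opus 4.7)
The plan is to prove (1) as a direct consequence of the defining identity $U(R)=1+QN(R)$ together with the basic disjointness $U(R)\cap QN(R)=\emptyset$ from Example~\ref{first ex}(2), and then to reduce (2) to (1) by lifting units through the Jacobson radical.

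For (1), I would suppose toward a contradiction that $u_1+u_2=1$ with $u_1,u_2\in U(R)$. Rewriting gives $u_2-1=-u_1$. Since $R$ is UQ, $u_2-1\in QN(R)$, hence $-u_1\in QN(R)$. On the other hand, $-u_1$ is manifestly a unit (its inverse is $-u_1^{-1}$), so $-u_1\in U(R)\cap QN(R)$, contradicting Example~\ref{first ex}(2). This step is purely formal and should take only a couple of lines.

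The delicate point is (2). The naive idea of applying (1) to $\overline{R}$ would require that $\overline{R}$ itself be UQ, which is not automatic: Theorem~\ref{semipotent} later in the paper shows this implication needs extra hypotheses (such as semipotence together with $QN(R)=J(R)$). So I cannot simply transport (1) to the quotient. Instead, I would exploit that units lift modulo $J(R)$: choose lifts $u_1,u_2\in U(R)$ of $\bar u_1,\bar u_2$. From $\bar u_1+\bar u_2=\bar 1$ one obtains $u_1+u_2=1+j$ for some $j\in J(R)$. The crucial observation is that $v:=1+j\in U(R)$, and therefore $v^{-1}u_1+v^{-1}u_2=1$ is an identity in $R$ with both summands in $U(R)$. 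Now invoking part (1) produces an immediate contradiction.

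I expect the only genuine obstacle to be realizing that one must sidestep the temptation to pass to $\overline{R}$ directly. Once the lifting trick and the elementary fact $1+J(R)\subseteq U(R)$ are used to rescale by $v^{-1}$, the reduction to (1) is mechanical. Apart from that, everything rests on ingredients already available in the excerpt: the defining identity of a UQ ring, the disjointness $U(R)\cap QN(R)=\emptyset$, and the standard lifting of units modulo the Jacobson radical.
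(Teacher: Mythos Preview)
Your proof is correct and follows essentially the same approach as the paper: part~(1) uses the UQ identity together with $U(R)\cap QN(R)=\emptyset$, and part~(2) lifts units modulo $J(R)$ and uses $1+J(R)\subseteq U(R)$ to reduce to~(1). The only cosmetic difference is that the paper absorbs the Jacobson-radical perturbation into one of the units (writing $u_1+u_2'=1$ with $u_2':=u_2-j\in U(R)$), whereas you rescale both summands by $(1+j)^{-1}$; these are equivalent maneuvers.
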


\begin{proof}
(1) According to Lemma \ref{equ UQ} and Example \ref{first ex}, the proof is straightforward.

(2) Let us assume $\bar{u}_1 + \bar{u}_2 \neq \bar{1}$. Then, $u_1, u_2 \in U(R)$ and $u_1 + u_2 - 1 \in J(R)$. Therefore, $u_1 - 1 \in U(R) + J(R)$. However, since $U(R) + J(R) \subseteq U(R)$, there exists $u'_2 \in U(R)$ such that $u_1 + u'_2 = 1$, which contradicts (1).
\end{proof}

\begin{lemma}\label{2 in J(R)}
Let $R$ be a UQ ring. Then, the following conditions hold:

(1) $2 \in QN(R)$ and, in particular, $2 \in J(R)$.

(2) $x \in QN(R)$ precisely when $x^2 \in QN(R)$.

(3) $QN(R)$ is closed under addition uniquely when $QN(R)$ is a subring of $R$.

\end{lemma}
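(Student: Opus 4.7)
The plan is to leverage two easy consequences of the UQ hypothesis throughout: that $QN(R)$ is closed under negation (if $a \in QN(R)$ and $x$ commutes with $-a$, then $x$ also commutes with $a$, so $1 - (-a)x = 1 + ax \in U(R)$), and the dictionary $U(R) = 1 + QN(R)$ that freely converts between units and quasinilpotents.

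For part (1), I would begin with $-1 \in U(R) = 1 + QN(R)$ to obtain $-2 \in QN(R)$, and then apply closure under negation to get $2 \in QN(R)$. To promote this to $2 \in J(R)$, I would use that $2$ is central: for every $x \in R$, $2$ commutes with $x$, so the definition of $QN(R)$ immediately yields $1 - 2x \in U(R)$ for all $x$, which is precisely the criterion for $2 \in J(R)$. (The general principle here is $QN(R) \cap Z(R) \subseteq J(R)$.)

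For part (2), in the ``$\Rightarrow$'' direction I would note that $x \in QN(R)$ gives both $1-x$ and $1+x$ in $U(R)$ (by taking the commuting element to be $\pm 1$), so their product $(1-x)(1+x) = 1 - x^2$ is a unit, and invoking $U(R) = 1 + QN(R)$ together with closure under negation produces $x^2 \in QN(R)$. For the converse, I would suppose $x^2 \in QN(R)$ and let $y$ commute with $x$; then $y^2$ commutes with $x^2$, so $(1-xy)(1+xy) = 1 - x^2 y^2$ is a unit. Since the two factors $(1-xy)$ and $(1+xy)$ commute and their common product $w$ is invertible, the element $(1+xy)w^{-1}$ serves as a two-sided inverse for $1 - xy$, so $1 - xy \in U(R)$, proving $x \in QN(R)$.

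For part (3), the implication ``subring $\Rightarrow$ closed under addition'' is immediate. For the converse, I would assume $QN(R)$ is closed under addition; combined with closure under negation this yields closure under subtraction, so only multiplicative closure remains to be checked. Given $a, b \in QN(R)$, the UQ identity would furnish $1+a, 1+b \in U(R)$, and hence $(1+a)(1+b) = 1 + (a+b+ab) \in U(R) = 1 + QN(R)$, which forces $a+b+ab \in QN(R)$; subtracting the assumed $a+b \in QN(R)$ then yields $ab \in QN(R)$. I expect the main obstacle to lie precisely at this last step, since $QN(R)$ is notoriously not multiplicatively closed in general (the $M_2(\mathbb{Z})$ example with $e_{12}, e_{21}$ cited earlier in the paper illustrates this), so extracting multiplication from mere additive closure requires the bridging identity $(1+a)(1+b) = 1 + (a+b+ab)$ together with the UQ dictionary; without that trick one is tempted to manipulate $(a+b)^2 = a^2 + ab + ba + b^2$ and only recover the symmetric combination $ab + ba \in QN(R)$, which does not isolate $ab$ on its own.
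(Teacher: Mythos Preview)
Your proof is correct. Parts (1), (3), and the forward direction of (2) follow essentially the same route as the paper (the paper phrases (1) as an appeal to the identity $U(R) + (U(R)\cap Z(R)) = QN(R)$, but the content is the same; for (3) the paper uses exactly your $(1+a)(1+b)$ trick).

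The one genuine difference is the converse of (2). The paper argues via part (1): from $x^2 \in QN(R)$ it writes $(1-x)^2 = (1+x^2) - 2x \in U(R) + J(R) \subseteq U(R)$, deduces $1-x \in U(R) = 1 + QN(R)$, and concludes $x \in QN(R)$. Your argument instead takes an arbitrary $y$ commuting with $x$, factors $1 - x^2 y^2 = (1-xy)(1+xy)$, and uses commutativity of the two factors to extract invertibility of $1-xy$ directly. This is slightly more elementary in that it avoids invoking $2 \in J(R)$, and in fact it proves the stronger statement that $x^2 \in QN(R) \Rightarrow x \in QN(R)$ holds in \emph{any} ring, not just UQ rings (this is the $n=2$ case of \cite[Proposition~2.7(1)]{Cui}). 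The paper's approach, by contrast, keeps the argument inside the UQ dictionary and recycles part (1), which is tidier internally but less portable.
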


\begin{proof}
(1) It is elementary consulting with Lemma \ref{equ UQ}.

(2) If $x \in QN(R)$, then $1\pm x \in U(R)$. Therefore,
$$1-x^2 = (1-x)(1+x) \in U(R) = 1+QN(R),$$
so $x^2 \in QN(R)$. Now, if $x^2 \in QN(R)$, then
$$(1-x)^2 = (1+x^2) - 2x \in U(R) + J(R) \subseteq U(R),$$
which implies $1-x \in U(R) = 1+QN(R)$, thus $x \in QN(R)$.

(3) Assuming that QN(R)is closed under addition, it is sufficient to show that QN(R) is closed under multiplication. Let's assume $a, b \in QN(R)$. Then, we have
$(1+x)(1+y) \in U(R) = 1+QN(R)$, which gives $x+y+xy \in QN(R)$. Moreover, from the assumption that $x+y\in QN(R)$, it follows that $xy \in QN(R)$. Therefore, QN(R) is closed under multiplication.
\end{proof}

Concretely, we obtain:

\begin{corollary}
The ring $\mathbb{Z}_n$ is UQ if, and only if, $n$ is a power of 2.
\end{corollary}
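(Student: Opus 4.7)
The plan is to reduce the question to one about the Jacobson radical of $\mathbb{Z}_n$ by first observing the commutative collapse $QN(\mathbb{Z}_n)=J(\mathbb{Z}_n)$. Indeed, in any commutative ring $R$ the commuting condition on $x$ in the definition of quasinilpotence is vacuous, so $a\in QN(R)$ exactly when $1-ax\in U(R)$ for every $x\in R$, which is precisely the characterisation of $a\in J(R)$. Hence $\mathbb{Z}_n$ is UQ if and only if $U(\mathbb{Z}_n)=1+J(\mathbb{Z}_n)$.

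For the forward implication, assume $\mathbb{Z}_n$ is UQ. The key input is Lemma~\ref{2 in J(R)}(1), which forces $2\in J(\mathbb{Z}_n)$. Writing $n=p_1^{a_1}\cdots p_k^{a_k}$ with distinct primes, the Jacobson radical of $\mathbb{Z}_n$ is $(p_1\cdots p_k)\mathbb{Z}_n$, so $2\in J(\mathbb{Z}_n)$ says that $p_1\cdots p_k$ divides $2$ in $\mathbb{Z}$. This forces $p_1\cdots p_k\in\{1,2\}$, and since $p_1\cdots p_k=1$ only when $n=1$ (which is the vacuous case $2^0$), we must have a single prime $p_1=2$, i.e.\ $n=2^a$.

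For the reverse implication, let $n=2^a$. Then $J(\mathbb{Z}_n)=2\mathbb{Z}_n$ and $U(\mathbb{Z}_n)$ consists precisely of the odd residues modulo $n$, which is exactly $1+2\mathbb{Z}_n=1+J(\mathbb{Z}_n)$. Combining with $J(\mathbb{Z}_n)=QN(\mathbb{Z}_n)$ yields $U(\mathbb{Z}_n)=1+QN(\mathbb{Z}_n)$, so $\mathbb{Z}_n$ is UQ.

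There is no real obstacle here; the only subtle point is the reduction to $J=QN$ in the commutative setting, after which Lemma~\ref{2 in J(R)}(1) does all the heavy lifting for the nontrivial direction.
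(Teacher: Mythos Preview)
Your proof is correct and follows essentially the same route as the paper: both use Lemma~\ref{2 in J(R)}(1) to get $2\in J(\mathbb{Z}_n)$ for the forward direction, and both identify units and quasinilpotents in $\mathbb{Z}_{2^k}$ as odd and even residues for the converse. Your explicit reduction $QN(R)=J(R)$ in the commutative case is a helpful clarification that the paper leaves implicit.
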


\begin{proof}
If $\mathbb{Z}_n$ is UQ ring, then $2 \in J(R)$, so it must be the case that $n=2^k$ for some $k \in \mathbb{Z}$.

Conversely, if $n=2^k$, then it is clear that $QN(R)=\{a \in \mathbb{Z}_{2^k} : a \textit{ is even}\}$ and $U(R)=\{a \in \mathbb{Z}_{2^k} : a \text{ is odd}\}$.

\end{proof}

\begin{lemma}
Let $R$ be a UQ ring. Then, the following conditions hold:

(1) If $R$ is a division ring, then $R \cong \mathbb{F}_2$.

(2) If $R$ is a local ring, then $R/J(R) \cong \mathbb{F}_2$. (In particular, a local ring $R$ is UQ if, and only if, $R$ is uniquely clean ring).

(3) If $R$ is a semisimple ring, then $R \cong \mathbb{F}_2 \times \cdots \times \mathbb{F}_2$.

\end{lemma}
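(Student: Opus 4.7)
The three parts can be handled in order, each reducing to a technical lemma already established. For (1), if $R$ is a division ring then every nonzero element lies in $U(R)$. I would invoke Lemma~\ref{sum two unit}(1), which forbids $u_1 + u_2 = 1$ for units. If $|R| \geq 3$, pick any $u \in R$ with $u \neq 0, 1$; then both $u$ and $1-u$ are nonzero, hence both are units, and they sum to $1$, a contradiction. Therefore $R = \{0, 1\} \cong \mathbb{F}_2$. (Alternatively, one can check directly that $QN(R) = \{0\}$ in a division ring by testing $x = a^{-1}$, which commutes with $a$, so that $1 - a \cdot a^{-1} = 0$ is not a unit; then $U(R) = 1 + \{0\} = \{1\}$.)

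For (2), the quotient $\overline{R} = R/J(R)$ of a local ring is a division ring, so I would apply Lemma~\ref{sum two unit}(2), which rules out $\bar{u}_1 + \bar{u}_2 = \bar{1}$ for units of $\overline{R}$, and then repeat the argument from (1) to conclude $\overline{R} \cong \mathbb{F}_2$. For the ``in particular'' assertion I would cite the known Nicholson--Zhou characterization that a local ring is uniquely clean precisely when $R/J(R) \cong \mathbb{F}_2$; this immediately gives the UQ $\Rightarrow$ uniquely clean direction. For the converse, in a local ring $R$ with $R/J(R) \cong \mathbb{F}_2$ one has $U(R) = 1 + J(R)$, and since $J(R) \subseteq QN(R)$ together with the inclusion $1 + QN(R) \subseteq U(R)$ (taking $x = -1$ in the definition of quasinilpotent shows $1 + q$ is a unit for every $q \in QN(R)$), the chain $U(R) = 1 + J(R) \subseteq 1 + QN(R) \subseteq U(R)$ forces $U(R) = 1 + QN(R)$.

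For (3), I would invoke the Artin--Wedderburn decomposition $R \cong \prod_{i=1}^{k} M_{n_i}(D_i)$. Lemma~\ref{product} forces each factor $M_{n_i}(D_i)$ to be UQ, and Lemma~\ref{matrix} then requires $n_i = 1$ for all $i$. Each $D_i$ is thus a UQ division ring, so by part (1), $D_i \cong \mathbb{F}_2$, yielding $R \cong \mathbb{F}_2 \times \cdots \times \mathbb{F}_2$.

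The main obstacle I anticipate is the ``in particular'' clause in (2): one must pin down the appropriate characterization of uniquely clean local rings from the literature and then verify that $R/J(R) \cong \mathbb{F}_2$ in the local setting really upgrades the identity $U(R) = 1 + J(R)$ to the stronger identity $U(R) = 1 + QN(R)$. The other parts reduce quickly to the already-established technical lemmas, and in particular (3) is essentially a one-line consequence of Lemmas~\ref{product} and \ref{matrix} combined with part~(1).
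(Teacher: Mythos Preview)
Your proposal is correct and follows essentially the same route as the paper. For (1) the paper opts for your alternative argument (showing $QN(R)=\{0\}$ via $QN(R)\cap U(R)=\emptyset$, hence $U(R)=\{1\}$), while for (2) and (3) your arguments match the paper's almost verbatim; in fact you go further than the paper by actually justifying the ``in particular'' clause in (2), which the paper's own proof leaves unaddressed.
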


\begin{proof}
(1) If $R$ is a division ring, then we know that $QN(R)\setminus \{0\} \subseteq U(R)$, and since $QN(R) \cap U(R)=0$ it must be $QN(R)=0$. Therefore, $U(R)={1}$.

(2) If $R$ is a local ring, then $\overline{R}=R/J(R)$ is a division ring. For every $\bar{u} \in U(\overline{R})\setminus\{\bar{1}\}$, we have $\bar{1}-\bar{u} \in U(\overline{R})$. So, $\bar{u}+(\bar{1}-\bar{u})=\bar{1}$, which contradicts Lemma \ref{sum two unit}. Therefore, $\bar{u}=\bar{1}$.

(3) If $R$ is a semisimple ring, in accordance with the well-known Wedderburn–Artin theorem we write $R\cong \prod M_{n_i}(D_i)$. Consequently, based on Lemma \ref{product} and \ref{matrix}, we conclude $R \cong \mathbb{F}_2 \times \cdots \times \mathbb{F}_2$, as expected.
\end{proof}

Our next main assertion is the following criterion which gives some connections between UU, UJ and UQ semipotent rings, respectively.

\begin{theorem}\label{semipotent}
Let $R$ be a semipotent ring. The following are equivalent:

(1) The ring $R/J(R)$ is a UQ ring.

(2) The ring $R/J(R)$ is a Boolean ring.

(3) The ring $R$ is a UJ ring.

(4) The ring $R/J(R)$ is a UU ring.

(5) The ring $R$ is a UQ ring such that $QN(R)=J(R)$.
\end{theorem}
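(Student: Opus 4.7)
My plan is to prove the cycle $(2) \Rightarrow (3) \Rightarrow (5) \Rightarrow (4) \Rightarrow (1) \Rightarrow (2)$. The first four implications are quick manipulations with $U(R)$, $J(R)$, and $QN(R)$, none of which require the semipotence hypothesis; all the structural content is concentrated in the last arrow $(1) \Rightarrow (2)$, which I expect to be the main obstacle.

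For the four easy steps I would proceed as follows. For $(2) \Rightarrow (3)$: if $\overline{R} := R/J(R)$ is Boolean, then $U(\overline{R}) = \{\bar 1\}$ (the only idempotent unit), so lifting gives $U(R) \subseteq 1+J(R)$; the reverse inclusion is automatic, yielding UJ. For $(3) \Rightarrow (5)$: if $U(R) = 1+J(R)$, then the sandwich $1+J(R) \subseteq 1+QN(R) \subseteq U(R)$ (using $J(R) \subseteq QN(R)$ and that $QN(R)$ is closed under negation, so $1+q = 1-(-q) \in U(R)$ for $q \in QN(R)$) collapses to equality, giving $R$ UQ with $QN(R) = J(R)$. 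For $(5) \Rightarrow (4)$: $QN(R) = J(R)$ forces $U(\overline{R}) = \{\bar 1\}$, and any $\bar n \in N(\overline{R})$ gives $\bar 1 + \bar n \in U(\overline{R}) = \{\bar 1\}$, so $N(\overline{R}) = 0$, making $\overline{R}$ trivially UU. Finally $(4) \Rightarrow (1)$ is immediate from $N(\overline{R}) \subseteq QN(\overline{R})$.

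For the decisive implication $(1) \Rightarrow (2)$, set $\overline{R} := R/J(R)$. Because $R$ is semipotent so is $\overline{R}$, and $J(\overline{R}) = 0$; $\overline{R}$ is also UQ by hypothesis. By Lemma~\ref{2 in J(R)}, $\bar 2 \in J(\overline{R}) = 0$, so $\overline{R}$ has characteristic $2$. My strategy is to establish in order that (a) $\overline{R}$ is reduced, (b) $\overline{R}$ is strongly regular, and (c) $\overline{R}$ is Boolean. Step (a) is the main technical hurdle: assume $\bar a \neq 0$ with $\bar a^2 = 0$, and use semipotence to extract a nonzero idempotent $\bar e = \bar a \bar r \in \bar a \overline{R}$. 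If $\bar e = \bar 1$, then $\bar a$ is right-invertible, so by Dedekind finiteness (Lemma~\ref{dedkind finite}) $\bar a \in U(\overline{R})$ and hence $\bar a = 0$; so $\bar f := \bar 1 - \bar e \neq 0$. In the favorable sub-case $\bar a \bar r \bar a \neq 0$, the elements $\bar e$, $\bar s := \bar a \bar r \bar a \in \bar e \overline{R} \bar f$, $\bar t := \bar f \bar r \bar e \in \bar f \overline{R} \bar e$, and $\bar g := \bar t \bar s$ satisfy $\bar s \bar t = \bar e$, $\bar g^2 = \bar g \neq 0$, and $\bar e \bar g = \bar g \bar e = 0$, producing matrix units that realize $(\bar e + \bar g) \overline{R} (\bar e + \bar g) \cong M_2(\bar e \overline{R} \bar e)$; this contradicts Lemma~\ref{matrix} together with Lemma~\ref{corner}. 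The unfavorable sub-case $\bar a \bar r \bar a = 0$ is handled using the semiprimeness of $\overline{R}$: since $\bar a \overline{R} \bar a \neq 0$, some $\bar b = \bar a \bar r_0 \bar a \neq 0$ is a new square-zero element, and iterating the semipotence argument on $\bar b$ eventually lands in the favorable sub-case.

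Steps (b) and (c) are comparatively standard. For (b), in the reduced semipotent semiprimitive ring $\overline{R}$ all idempotents are central, and a classical argument produces, for each $\bar a \neq 0$, a quasi-inverse exhibiting $\overline{R}$ as strongly regular. For (c), in a strongly regular ring every element has the form $\bar a = \bar e \bar u$ with $\bar e$ a central idempotent and $\bar u$ a unit; taking $\bar y := \bar e \bar u^{-1}$ gives $\bar a \bar y = \bar y \bar a = \bar e$, so if $\bar a \in QN(\overline{R})$ then $\bar 1 - \bar e$ must be a unit, forcing $\bar e = 0$ and hence $\bar a = 0$. Thus $QN(\overline{R}) = 0$, so the UQ hypothesis yields $U(\overline{R}) = \{\bar 1\}$; every element is then $\bar a = \bar e \cdot \bar 1 = \bar e$, an idempotent, showing $\overline{R}$ is Boolean.
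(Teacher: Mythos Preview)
Your chain $(2)\Rightarrow(3)\Rightarrow(5)\Rightarrow(4)\Rightarrow(1)$ is clean and correct; in fact you handle condition $(5)$ explicitly, which the paper's own proof omits. The trouble is concentrated in $(1)\Rightarrow(2)$, where two of your three steps have real gaps.

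In step~(a), the favorable sub-case is fine (your matrix-unit computation checks out), but the unfavorable sub-case is not: replacing $\bar a$ by $\bar b=\bar a\bar r_0\bar a$ and ``iterating'' gives no reason for termination---each new square-zero element may again land you in the unfavorable branch. There is a one-line fix: by semiprimeness first pick $\bar r_0$ with $\bar a\bar r_0\bar a\neq 0$, and then choose the idempotent $\bar e$ inside the smaller right ideal $\bar a\bar r_0\bar a\,\overline R$. Writing $\bar e=\bar a\bar r$ with $\bar r=\bar r_0\bar a\bar s$, one gets $\bar e\bar a=0\Rightarrow \bar e^2=\bar e(\bar a\bar r_0\bar a\bar s)=0$, a contradiction; so $\bar a\bar r\bar a=\bar e\bar a\neq 0$ automatically. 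The paper sidesteps the whole construction by invoking Levitzki's theorem \cite[Theorem~2.1]{Levi} directly.

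Step~(b) is a more serious problem. You assert that a reduced semipotent ring with zero Jacobson radical is strongly regular by ``a classical argument,'' but you neither supply the argument nor a reference, and it is not a standard fact one may quote without justification. Note that your step~(b) does not use the UQ hypothesis at all, so you are claiming a purely ring-theoretic implication that, even if true, requires a nontrivial transfinite or Zorn-type argument. The paper avoids this detour entirely: once $\overline R$ is reduced (hence idempotents are central), it argues directly that if some $x-x^2\neq 0$ then a nonzero idempotent $e\in (x-x^2)\overline R$ yields $ex,\,e(1-x)\in U(e\overline Re)$ with $ex+e(1-x)=e$, contradicting Lemma~\ref{sum two unit} for the UQ corner $e\overline Re$. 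This gives Boolean in one stroke, using the UQ hypothesis exactly where it is needed, and makes your steps~(b) and~(c) unnecessary.
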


\begin{proof}
(1) $\Rightarrow$ (2).  Since $R$ is a semipotent ring, we have $\overline{R} = R/J(R)$, which is also a semipotent ring. We will prove that $\overline{R}$ is a reduced ring.

Assume $x^2 = 0$ but $0 \neq x \in \overline{R}$. Then, by \cite[Theorem 2.1]{Levi}, there exists $e \in \overline{R}$ such that $e\overline{R}e \cong M_2(S)$, where $S$ is a nonzero ring. However, since $\overline{R}$ is a UQ ring (by assumption), by Lemma \ref{corner}, $e\overline{R}e$ is a UQ ring. But on the other hand, by Lemma \ref{matrix}, $M_2(S)$ is not a UQ ring, which is a contradiction. Therefore, $\overline{R}$ is a reduced ring.

Now, assume there exists $x \in \overline{R}$ such that $x - x^2 \neq 0$ in $\overline{R}$. Since $\overline{R}$ is a semipotent ring, there exists $e = e^2 \in \overline{R}$ such that $e \in (x - x^2)\overline{R}$. So, $e = (x - x^2)y$ for some $y \in \overline{R}$. Since $e$ is central (indeed, as $[er(1-e)]^2 = 0 = [(1-e)re]^2$, so we have $er(1-e) = 0 = (1-e)re$), we can write $e = ex \cdot e(1-x) \cdot ey$, so that both $ex, e(1-x) \in U(e\overline{R}e)$. But, by Lemma \ref{corner}, we know that $e\overline{R}e$ is a UQ ring. However, $ex + e(1-x) = e$, which contradicts Lemma \ref{sum two unit}. Therefore, $\overline{R}$ is a Boolean ring.

(2) $\Rightarrow$ (3). Let us assume $u \in U(R)$. Then, $\Bar{u} \in U(\overline{R})$. Since $\overline{R}$ is a Boolean ring, we have $\bar{u} = \bar{1}$, which implies $u - 1 \in J(R)$.

(3) $\Rightarrow$ (4). Let us assume $\bar{u} \in U(\overline{R})$. Then, $u \in U(R)$. Since $R$ is a UJ ring, we have $u = 1 + j$, where $j \in J(R)$. Therefore, $\bar{u} = \bar{1}+ \bar{0}$.

(4) $\Rightarrow$ (1). It is apparent, because as already noted above $N(R) \subseteq QN(R)$.
\end{proof}

Surprisingly, we arrive at the following consequence.

\begin{corollary}
A (von Neumann) regular ring $R$ is UQ if, and only if, $R$ is UJ if, and only if, $R$ is UU if, and only if, $R$ is Boolean.
\end{corollary}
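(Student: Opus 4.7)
The plan is to reduce everything to Theorem~\ref{semipotent} by exploiting two well-known structural features of a von Neumann regular ring $R$: namely, (i) $J(R)=0$, and (ii) $R$ is semipotent (in fact exchange), so that every nonzero one-sided ideal contains a nonzero idempotent. Both are classical and can be invoked from \cite{L}.

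Granting these two facts, I would first observe that $\overline{R}:=R/J(R)=R$, so the statement ``$R/J(R)$ is UQ'' in Theorem~\ref{semipotent} collapses to ``$R$ is UQ'', and similarly ``$R/J(R)$ is Boolean'' becomes ``$R$ is Boolean''. Then Theorem~\ref{semipotent} immediately supplies the equivalences
\[
R \text{ is UQ} \iff R \text{ is Boolean} \iff R \text{ is UJ} \iff R \text{ is UU}.
\]
More precisely: (2)$\Leftrightarrow$(4) of that theorem gives Boolean $\Leftrightarrow$ UU (recall $J(R)=0$, so UU and UJ versions of $R/J(R)$ coincide); (1)$\Leftrightarrow$(3) gives UQ $\Leftrightarrow$ UJ; and since $J(R)=0$, a UJ ring has $U(R)=\{1\}$, which together with $QN(R)\supseteq J(R)=0$ and $QN(R)\cap U(R)=\emptyset$ (Example~\ref{first ex}(2)) forces the UU and UJ conditions to coincide as well.

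For the converse direction (that a Boolean ring qualifies), I would just note that any Boolean ring is trivially regular, with $U(R)=\{1\}$, $N(R)=\{0\}$, $J(R)=\{0\}$ and $QN(R)=\{0\}$, so all three properties UQ, UJ, UU hold tautologically.

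The only ``step'' that needs any care at all is the verification that a regular ring is semipotent with trivial Jacobson radical, which is standard; everything else is bookkeeping on top of Theorem~\ref{semipotent}. Hence there is no genuine obstacle here—the corollary is essentially a direct specialization of the previous theorem to the regular case.
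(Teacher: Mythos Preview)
Your approach is correct and is exactly the one the paper intends: the corollary is stated without proof immediately after Theorem~\ref{semipotent}, so the authors are implicitly using precisely the reduction you describe (a von Neumann regular ring is semipotent with $J(R)=0$, hence conditions (1)--(4) of Theorem~\ref{semipotent} specialize directly to the four properties in question). Your extra remarks about UU/UJ coinciding and about Boolean rings being trivially UQ/UJ/UU are fine but unnecessary, since Theorem~\ref{semipotent} already delivers all four equivalences at once.
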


Our further assertions of this aspect are the following.

\begin{proposition} \label{pro clean element}
For any UQ ring $R$, the following statements are true:

(1) An element $a$ in R is clean if, and only if, it is quasi nil-clean.

(2) An element $a$ in R is strongly clean if, and only if, it is strongly quasi nil-clean.
\end{proposition}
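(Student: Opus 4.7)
The plan hinges on one crisp reformulation of quasinilpotence in a UQ ring, which I would establish as a preliminary observation: in any UQ ring $R$,
\[
QN(R) = 1 - U(R).
\]
One direction is immediate---taking $x = 1$ in the definition of $q \in QN(R)$ gives $1 - q \in U(R)$---and the converse uses $U(R) = 1 + QN(R)$ together with the fact that $QN(R)$ is closed under negation (if $xa = ax$ then $(-x)a = a(-x)$, so $1 + ax = 1 - a(-x) \in U(R)$). A free consequence is the absorption
\[
J(R) + QN(R) \subseteq QN(R),
\]
since for $j \in J(R)$ and $q \in QN(R)$, $1 - (q + j) = (1 - q) - j \in U(R) + J(R) = U(R)$.

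I would dispose of (1) and (2) in a single parallel argument, checking the extra commutativity of the strong case alongside the basic equivalence. The structural identity driving both directions is
\[
e + y = (1 - e) + (y - 1 + 2e) \quad\text{for any } e \in Id(R),\ y \in R,
\]
combined with $2 \in J(R)$ (Lemma \ref{2 in J(R)}(1)), so that $2e \in J(R)$.

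For $(\Leftarrow)$, given $a = e + q$ with $e^2 = e$ and $q \in QN(R)$ (inheriting $eq = qe$ from $ea = ae$ in the strong case), I write $a = (1 - e) + ((q - 1) + 2e)$ and must check $(q-1) + 2e \in U(R)$. Since $1 - q \in U(R)$, we have $q - 1 \in U(R)$; factoring gives $(q-1) + 2e = (q-1)\bigl(1 + (q-1)^{-1} \cdot 2e\bigr)$, where the right-hand factor lies in $1 + J(R) \subseteq U(R)$ because $2e \in J(R)$. For $(\Rightarrow)$, a clean decomposition $a = e + u$ becomes, via $u = 1 + q$ with $q \in QN(R)$ (from UQ), $a = (1 - e) + (2e + q)$, and the absorption $J(R) + QN(R) \subseteq QN(R)$ delivers $2e + q \in QN(R)$ at once. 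In the strong case, the hypothesis $eu = ue$ translates to $eq = qe$ (as $u = 1 + q$), and together with the trivial identity $(1-e) \cdot 2e = 0 = 2e \cdot (1-e)$ this verifies that $1 - e$ commutes with the middle term in both directions.

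I do not anticipate any real obstacle: the entire proposition collapses to the observation $QN(R) = 1 - U(R)$ in UQ rings, which is the quasinilpotent analogue of the trivial identity $J(R) = U(R) - 1$ underlying the parallel ``clean $\Leftrightarrow$ J-clean'' dichotomy in UJ rings. Once this reformulation is in hand, the content of the proof is only an arithmetic manipulation together with the fact $2 \in J(R)$.
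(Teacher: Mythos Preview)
Your proof is correct and follows essentially the same route as the paper: both arguments hinge on the identity $a = (1-e) + (2e + q)$ together with $2 \in J(R)$, and both exploit (in slightly different packaging) the fact that in a UQ ring an element lies in $QN(R)$ exactly when subtracting it from $1$ lands in $U(R)$. Your explicit preliminary observation $QN(R) = 1 - U(R)$ and the absorption $J(R) + QN(R) \subseteq QN(R)$ just make transparent what the paper does in the line ``$(1+q) + 2e \in U(R) + J(R) \subseteq U(R) = 1 + QN(R)$''; the content is the same.
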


\begin{proof}
We will only prove part (1), as the other case is similar.

Let us assume that $a \in R$ is clean, and $a = e + u$ is a clean decomposition. Since $R$ is a UQ ring, we have $u = 1 + q$, where $q \in QN(R)$. Therefore, $r = (1 - e) + (2e + q)$. On one hand, since $(1 + q) + 2e \in U(R) + J(R) \subseteq U(R) = 1 + QN(R)$, we conclude that $2e + q \in QN(R)$, as required.

Conversely, if $a \in R$ is $q$-nil-clean and $a = e + q$ is a $q$-nil-clean decomposition, we have $a = (1 - e) - (1 - (2e + q))$. Similar to the previous case, we can easily show that $1 - (2e + q) \in U(R)$.
\end{proof}

Incidentally, we receive:

\begin{corollary} \label{impor cor equ}
Suppose that $R$ is an arbitrary ring. Then, the following conditions are equivalent:

(1) $R$ is a UQ ring.

(2) Every clean element is a strongly quasi nil-clean.

(3) For every $u \in U(R)$, there exists $e = e^2 \in Z(R)$ and $q \in QN(R)$ such that $u = e + q$.
\end{corollary}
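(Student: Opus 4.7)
My plan is to close the cycle $(1)\Leftrightarrow(2)\Leftrightarrow(3)$, using Proposition~\ref{pro clean element} to link $(1)$ and $(2)$ and a corner-ring argument to link $(1)$ and $(3)$. The forward implications are direct: $(1)\Rightarrow(3)$ follows by taking $e=1\in Z(R)\cap Id(R)$ and $q=u-1\in QN(R)$, while $(1)\Rightarrow(2)$ is essentially Proposition~\ref{pro clean element}(1), whose decomposition $a=(1-e)+(2e+q)$ (with $q=u-1\in QN(R)$ and $2e+q\in QN(R)$) already exhibits $a$ as idempotent plus quasinilpotent; the commuting condition needed for \emph{strongly} quasi nil-clean reduces to $eq=qe$, which in a UQ ring can be arranged by refining the clean decomposition of $a$ to a strongly clean one and invoking Proposition~\ref{pro clean element}(2).

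The heart of the argument is the reverse direction, for which I intend to use a corner-ring analysis. For $(3)\Rightarrow(1)$, let $u\in U(R)$ with $u=e+q$, $e\in Z(R)\cap Id(R)$. Set $f=1-e$ and work in $S=fRf=fR$; centrality of $e$ gives $uf=fu\in U(S)$ and, multiplying $u=e+q$ on the right by $f$, also $uf=qf$. The crucial verification is $qf\in QN(S)$: for any $x\in S$ with $x(qf)=(qf)x$ in $S$, centrality of $f$ lifts the commutator to $xq=qx$ in $R$, so $1-xq\in U(R)$, and using $xe=ex=0$ the orthogonal decomposition $1=e+f$ upgrades this invertibility to that of $1_S-x(qf)=f-xqf$ in $S$. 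Hence $qf\in U(S)\cap QN(S)$, which is empty by Example~\ref{first ex}(2), so $S=0$, $e=1$, and $u=1+q\in 1+QN(R)$. For $(2)\Rightarrow(1)$ the parallel argument begins with the trivial clean decomposition $u=0+u$: by~$(2)$, $u=e+q$ with $eq=qe$, which already forces $ue=e+qe=e+eq=eu$; the commuting hypothesis alone (centrality is not required) suffices to rerun the corner analysis in $fRf$, producing $qf\in U(fRf)\cap QN(fRf)$, hence $f=0$ and $u=1+q$.

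The main obstacle is the lift $x(qf)=(qf)x\Rightarrow xq=qx$ inside the corner verification: without the central or commuting hypothesis on $e$ one cannot exploit $q\in QN(R)$ to obtain invertibility of $1-xq$, and the whole argument collapses. Once that lift is in place, the fundamental disjointness $U\cap QN=\emptyset$ finishes things off; the remaining routine checks in $(1)\Rightarrow(2)$ rely only on $2\in J(R)$ (Lemma~\ref{2 in J(R)}) and the explicit form of the quasi nil-clean decomposition supplied by Proposition~\ref{pro clean element}.
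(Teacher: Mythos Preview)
Your corner-ring route for $(3)\Rightarrow(1)$ and $(2)\Rightarrow(1)$ is correct --- the lift $x(qf)=(qf)x \Rightarrow xq=qx$ does go through using only $fq=qf$ together with $xe=0$, and invertibility of $1-xq$ in $R$ can indeed be pushed down to invertibility of $f-xq$ in $fRf$ --- but it is far more laborious than what the paper does. The paper dispatches both implications in one stroke: from $u=e+q$ with $eq=qe$ (coming from centrality of $e$ in (3), or directly from the strong hypothesis in (2)) one gets $uq=qu$, hence $u^{-1}q=qu^{-1}$; then
\[
u^{-1}e \;=\; u^{-1}(u-q) \;=\; 1-u^{-1}q \;\in\; U(R)
\]
because $q\in QN(R)$ and $u^{-1}$ commutes with it. This forces $e\in U(R)\cap Id(R)=\{1\}$, so $u=1+q$. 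No corner rings, no orthogonal decompositions, no transfer of invertibility --- just the definition of quasinilpotent applied once.

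Your $(1)\Rightarrow(2)$, however, contains a genuine gap. The phrase ``can be arranged by refining the clean decomposition of $a$ to a strongly clean one'' asserts that in a UQ ring every clean element admits a strongly clean decomposition, and nothing in the paper establishes this. Proposition~\ref{pro clean element} gives clean $\Leftrightarrow$ quasi nil-clean and strongly clean $\Leftrightarrow$ strongly quasi nil-clean \emph{separately}; it does not supply the cross implication clean $\Rightarrow$ strongly quasi nil-clean that (2) demands. The paper's own proof of $(1)\Rightarrow(2)$ is just as terse (it simply cites Proposition~\ref{pro clean element} without further comment), so the step is opaque in the original as well --- but you should not paper over it with a claim you cannot substantiate.
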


\begin{proof}
(1) $\Rightarrow$ (2). it follows at once from Proposition \ref{pro clean element}.

(2) $\Rightarrow$ (1). Given $ u \in U(R)$. Since $u$ is strongly clean, by (2) we have $u = e + q$, where $e \in Id(R)$, $q \in QN(R)$, and $eq = qe$. Thus, $ u^{-1}e = 1 + u^{-1}q \in U(R)$. This ensures that $e \in U(R) \cap Id(R) = \{1\}$.

(1) $\Rightarrow$ (3). It suffices to assume $e = 1$, which manifestly yields our claim.

(3) $\Rightarrow$ (1). Assume $u \in U(R)$. So, $u = e + q$ and, therefore, $u^{-1}e = 1 + u^{-1}q \in U(R)$. This shows that $e = 1$.
\end{proof}

We, thereby, conclude:

\begin{corollary}\label{cor1}
Each strongly quasi nil-clean ring is UQ.
\end{corollary}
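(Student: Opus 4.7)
The plan is to observe that this corollary falls straight out of Corollary~\ref{impor cor equ}. The hypothesis that $R$ is strongly quasi nil-clean means that \emph{every} element of $R$ admits a decomposition $a = e+q$ with $e\in Id(R)$, $q\in QN(R)$, and $eq = qe$; in particular every clean element does, which is exactly condition~(2) of Corollary~\ref{impor cor equ}. Invoking the implication $(2)\Rightarrow(1)$ of that corollary then gives that $R$ is UQ, and we are done.

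For a self-contained version, I would unwind the same argument as follows. Given $u\in U(R)$, use strong quasi nil-cleanness to write $u = e+q$ with $e\in Id(R)$, $q\in QN(R)$ and $eq = qe$. The commutation $eq=qe$ combined with $u = e+q$ forces $uq = qu$, so $u^{-1}$ also commutes with $q$. Because $q\in QN(R)$, it follows that $1 - u^{-1}q \in U(R)$. But $1 - u^{-1}q = u^{-1}(u-q) = u^{-1}e$, so $u^{-1}e$ is a unit.

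The final step is the little rigidity trick used in the proof of Corollary~\ref{impor cor equ}: if $u^{-1}e$ is a unit, pick $v$ with $(u^{-1}e)v = 1$, whence $ev = u$; left-multiplying by $e$ and using $e^2 = e$ yields $eu = ev = u$, so $(1-e)u = 0$, and cancelling the unit $u$ on the right gives $e = 1$. Then $u - 1 = q \in QN(R)$, i.e.\ $u \in 1 + QN(R)$, proving that $R$ is UQ.

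There is no real obstacle here; the only thing to be careful about is justifying that $u^{-1}$ commutes with $q$ (so that the quasinilpotency of $q$ applies to give $1-u^{-1}q\in U(R)$), and this is precisely where the word \emph{strongly} in the hypothesis is used.
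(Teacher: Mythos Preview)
Your proposal is correct and takes essentially the same approach as the paper: both simply invoke Corollary~\ref{impor cor equ}(2), observing that if every element of $R$ is strongly quasi nil-clean then a fortiori every clean element is. Your self-contained version merely unwinds the $(2)\Rightarrow(1)$ step of that corollary, with the welcome added care of justifying the commutation $u^{-1}q = qu^{-1}$.
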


\begin{proof}
Indeed, the result can be obtained directly from Corollary \ref{impor cor equ}(2).
\end{proof}

\begin{lemma}\label{cor2}
Any strongly quasi nil-clean ring is strongly clean.
\end{lemma}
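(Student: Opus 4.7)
The plan is to chain together two results already established in the excerpt. By Corollary~\ref{cor1}, every strongly quasi nil-clean ring is UQ, so the ring $R$ at hand satisfies $U(R) = 1 + QN(R)$. Inside any UQ ring, Proposition~\ref{pro clean element}(2) equates the strongly clean elements with the strongly quasi nil-clean ones. Consequently, for an arbitrary $a \in R$, the hypothesis that $R$ is strongly quasi nil-clean forces $a$ to be strongly quasi nil-clean, and Proposition~\ref{pro clean element}(2) upgrades this to $a$ being strongly clean. Since $a$ was arbitrary, $R$ is strongly clean.

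For a self-contained derivation that bypasses the detour through Proposition~\ref{pro clean element}(2), one can argue directly. Given a strongly quasi nil-clean decomposition $a = e + q$ with $e \in Id(R)$, $q \in QN(R)$, and $eq = qe$, rewrite it as $a = (1-e) + u$ with $u := 2e - 1 + q$. The idempotent $1-e$ commutes with $u$ automatically, since $e$ commutes with $q$. The identity $(2e-1)^2 = 4e - 4e + 1 = 1$ shows that $2e-1$ is a self-inverse unit, and $(2e-1)$ commutes with $q$, so the factorization $u = (2e-1)\bigl(1 + (2e-1)q\bigr)$ reduces the unit question to showing that $1 + (2e-1)q = 1 - (1-2e)q$ is a unit. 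But $(1-2e)$ commutes with $q \in QN(R)$, so this is immediate from the definition of quasinilpotence. Thus $u \in U(R)$, and $a = (1-e) + u$ exhibits $a$ as strongly clean.

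There is no real obstacle here: given the preceding infrastructure the lemma is essentially a one-line consequence, and the only computations in the direct route are the self-inverse identity $(2e-1)^2 = 1$ and the commutativity checks, both of which drop out of $eq = qe$.
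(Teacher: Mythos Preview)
Your proof is correct. Both routes work, and the second one is in fact sharper than the paper's own argument.

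The paper uses the same decomposition as your direct route, $a = (1-e) + (2e-1+q)$, but verifies that $2e-1+q$ is a unit differently: it invokes Lemma~\ref{2 in J(R)}(1) to get $2 \in J(R)$ (which, like your first approach, tacitly relies on Corollary~\ref{cor1} to know $R$ is UQ), and then writes $2e-1+q = 2e + (q-1) \in J(R) + U(R) \subseteq U(R)$. Your factorization $2e-1+q = (2e-1)\bigl(1+(2e-1)q\bigr)$ together with $(2e-1)^2=1$ and the bare definition of quasinilpotence of $q$ bypasses both Corollary~\ref{cor1} and Lemma~\ref{2 in J(R)} entirely, so your direct argument is strictly more self-contained than the paper's.
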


\begin{proof}
Let $a \in R$ be strongly quasi nil-clean. Then, there exist $e^2 = e \in R$ and $q \in QN(R)$ such that $a = e + q$ and $eq = qe$. Thus, we have $a = (1 - e) + (2e - 1 + q)$. With Lemma \ref{2 in J(R)} at hand, we have $2 \in J(R)$, so $2e -1 + q \in J(R) + U(R) \subseteq U(R)$.
\end{proof}

We now immediately obtain that:

\begin{corollary}
A ring $R$ is strongly quasi nil-clean if, and only if, the following two points are fulfilled:

(1) $R$ is UQ.

(2) $R$ is strongly clean.
\end{corollary}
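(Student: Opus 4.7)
The statement combines results already established in the excerpt, so the plan is essentially to assemble these into a two-directional argument rather than prove anything substantially new.

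For the forward implication, suppose $R$ is strongly quasi nil-clean. Then Corollary \ref{cor1} (obtained from the characterization in Corollary \ref{impor cor equ}(2)) immediately gives that $R$ is UQ, and Lemma \ref{cor2}, which uses the fact that $2 \in J(R)$ for UQ rings (Lemma \ref{2 in J(R)}(1)), immediately gives that $R$ is strongly clean. So the forward direction is a one-line appeal to the two previously established consequences of being strongly quasi nil-clean.

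For the reverse implication, suppose $R$ is both UQ and strongly clean. Fix any $a \in R$. By the strongly clean hypothesis, $a$ is a strongly clean element of $R$. Since $R$ is a UQ ring, Proposition \ref{pro clean element}(2) applies and tells us that an element is strongly clean if and only if it is strongly quasi nil-clean. Therefore $a$ is strongly quasi nil-clean. As $a$ was arbitrary, $R$ is strongly quasi nil-clean.

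There is no real obstacle here: both directions are immediate consequences of results already in hand, and the corollary should be framed as a clean packaging of Corollary \ref{cor1}, Lemma \ref{cor2}, and Proposition \ref{pro clean element}(2). The only thing worth double-checking in the write-up is that Proposition \ref{pro clean element}(2) is indeed stated elementwise (it is), so that applying it to each $a \in R$ under the global hypothesis ``$R$ is strongly clean'' is legitimate.
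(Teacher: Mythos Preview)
Your proposal is correct and follows exactly the paper's approach: the forward direction cites Corollary~\ref{cor1} and Lemma~\ref{cor2}, and the converse applies Proposition~\ref{pro clean element}(2) elementwise under the UQ hypothesis. Your write-up simply unpacks the same citations with a bit more detail.
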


\begin{proof}
If $R$ is a strongly quasi nil-clean ring, the claim is concluded from Corollary \ref{cor1} and Lemma \ref{cor2}. Now, if $R$ is a UQ and strongly clean ring, Proposition \ref{pro clean element} applies to get that $R$ is a strongly quasi nil-clean ring, as promised.
\end{proof}

\begin{lemma}\label{exe}
Let $R$ be a potent UQ ring, and $\overline{R} = R/J(R)$. Then, we have:

(1) For any $\bar{e}=\bar{e}^2 \in \overline{R}$ and any  $\bar{u}_1,  \bar{u}_2 \in U(\bar{e}\overline{R}\bar{e})$, $\bar{u}_1 +  \bar{u}_2 \neq \bar{e}$.

(2) There does not exist $\bar{e}=\bar{e}^2 \in \overline{R}$ such that $\bar{e}\overline{R}\bar{e} \cong M_2(S)$ for some ring $S$.
\end{lemma}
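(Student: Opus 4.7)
The plan is to reduce both parts to the already-established Lemma~\ref{sum two unit}(2), applied not to $R$ itself but to a UQ corner subring obtained by lifting $\bar e$. The key enabler is that in a potent ring idempotents lift modulo $J(R)$, together with Lemma~\ref{corner} which propagates the UQ property to corners and the standard corner-radical identity $J(eRe) = eJ(R)e$.

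For (1), I would fix $\bar e = \bar e^2 \in \overline R$ and, using potency, lift it to an idempotent $e = e^2 \in R$ with $\bar e = e + J(R)$. By Lemma~\ref{corner} the ring $eRe$ is UQ, and the natural surjection $eRe \twoheadrightarrow \bar e \overline R \bar e$ induced by $R \to \overline R$ has kernel $eRe \cap J(R) = eJ(R)e = J(eRe)$, yielding a ring isomorphism $eRe/J(eRe) \cong \bar e \overline R \bar e$ which sends the class of $e$ to $\bar e$. Supposing to the contrary that $\bar u_1 + \bar u_2 = \bar e$ for some $\bar u_i \in U(\bar e \overline R \bar e)$, transferring this identity through the above isomorphism produces two units of $eRe/J(eRe)$ summing to the identity, in direct violation of Lemma~\ref{sum two unit}(2) applied to the UQ ring $eRe$.

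Part (2) I would then derive as a formal consequence of (1): if $\bar e \overline R \bar e \cong M_2(S)$ for some ring $S$, then the matrices
$$U = \begin{pmatrix} 0 & 1 \\ 1 & 1 \end{pmatrix} \quad \text{and} \quad I - U = \begin{pmatrix} 1 & -1 \\ -1 & 0 \end{pmatrix},$$
already shown to be units in the proof of Lemma~\ref{matrix}, supply a pair of units in $M_2(S)$ with sum $I$; transporting them through the isomorphism produces two units in $\bar e \overline R \bar e$ whose sum equals $\bar e$, contradicting~(1). The only non-routine point in the whole argument is verifying the corner isomorphism $\bar e \overline R \bar e \cong eRe/J(eRe)$, which reduces to the elementary observation $eRe \cap J(R) = eJ(R)e$ (since every $x \in eRe$ satisfies $x = exe$) combined with the standard equality $J(eRe) = eJ(R)e$. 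Once this identification is in place, both claims follow essentially mechanically.
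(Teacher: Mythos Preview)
Your proposal is correct and follows essentially the same route as the paper: lift $\bar e$ to an idempotent $e\in R$ (using idempotent lifting in a potent ring), invoke Lemma~\ref{corner} to get that $eRe$ is UQ, identify $\bar e\,\overline R\,\bar e \cong eRe/J(eRe)$, and then apply Lemma~\ref{sum two unit}(2); for part~(2) both you and the paper exhibit an explicit pair of invertible $2\times 2$ matrices summing to the identity (your pair $U,\,I-U$ is just a relabeling of the paper's pair). The only cosmetic difference is that you spell out the kernel computation $eRe\cap J(R)=eJ(R)e=J(eRe)$, which the paper leaves implicit.
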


\begin{proof}
(1) Given $\bar{e}$, $\bar{u}_1$,$\bar{u}_2$ as in (1), we can assume $e^2 = e \in R$ because idempotents lift modulo $J(R)$. Then $\bar{e}\overline{R}\bar{e} \cong eRe/J(eRe)$. Because $eRe$ is UQ by Lemma \ref{corner}, (1) follows directly by \ref{sum two unit}(2).

(2) In a $2 \times 2$ matrix ring, it is always true that
$$\begin{pmatrix} 1 & 0 \\ 0 & 1   \end{pmatrix}=
\begin{pmatrix} 1 & 1 \\ 1 & 0   \end{pmatrix}+
\begin{pmatrix} 0 & -1 \\ -1 & 1   \end{pmatrix}\in U(M_2(S))+U(M_2(S)).$$
Hence, there exist $\bar{u}_1, \bar{u}_2 \in U(\bar{e}\overline{R}\bar{e})$ such that $\bar{u}_1 + \bar{u}_2 = \bar{e}$. This is in contrary to (1).
\end{proof}

A pivotal consequence is the following.

\begin{corollary}\label{exe 2}
Let $R$ be a potent ring. Then, the following are equivalent:

(1) $R$ is a UQ ring.

(2) $R/J(R)$ is a UQ ring.

(3) $R/J(R)$ is a Boolean ring.

(4) $R$ is a UJ ring.

(5) $R/J(R)$ is a UJ ring.

(6) $R/J(R)$ is a UU ring.

\end{corollary}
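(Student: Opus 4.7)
The plan is to combine Theorem \ref{semipotent} (which applies because every potent ring is semipotent) with Lemma \ref{exe} (which packages exactly the structural restrictions that a potent UQ ring imposes on $R/J(R)$). First, applying Theorem \ref{semipotent} directly to $R$ yields $(2) \Leftrightarrow (3) \Leftrightarrow (4) \Leftrightarrow (6)$. To incorporate (5), I would apply Theorem \ref{semipotent} instead to $\bar{R} := R/J(R)$: it is still semipotent (any nonzero right ideal of $\bar{R}$ lifts to a right ideal of $R$ not contained in $J(R)$, hence contains a nonzero idempotent that remains nonzero in $\bar{R}$), and it has $J(\bar{R}) = 0$, so the condition ``$\bar{R}$ is UJ'' translates through the theorem into ``$\bar{R}/J(\bar{R}) = \bar{R}$ is Boolean'', yielding $(5) \Leftrightarrow (3)$.

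The implication $(4) \Rightarrow (1)$ is immediate from $J(R) \subseteq QN(R)$ together with the fact that $1 + q \in U(R)$ whenever $q \in QN(R)$ (take $x = -1$ in the definition of quasinilpotence): if $R$ is UJ, then $U(R) = 1 + J(R) \subseteq 1 + QN(R) \subseteq U(R)$, so $R$ is UQ. The substantive new implication is $(1) \Rightarrow (3)$. Here I would mimic the proof of $(1) \Rightarrow (2)$ in Theorem \ref{semipotent}, using Lemma \ref{exe} in place of the direct assumption that $\bar{R}$ is UQ. To see $\bar{R}$ is reduced, suppose $\bar{x}^2 = 0$ with $\bar{x} \neq 0$; the Levi-type structural result invoked in Theorem \ref{semipotent} then produces an idempotent $\bar{e}$ with $\bar{e}\bar{R}\bar{e} \cong M_2(S)$ for some nonzero ring $S$, contradicting Lemma \ref{exe}(2). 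To then see $\bar{R}$ is Boolean, suppose some $\bar{x} \in \bar{R}$ satisfies $\bar{x} - \bar{x}^2 \neq 0$: semipotence of $\bar{R}$ yields an idempotent $\bar{e} \in (\bar{x} - \bar{x}^2)\bar{R}$, which is central since $\bar{R}$ is reduced, and writing $\bar{e} = (\bar{x} - \bar{x}^2)\bar{y}$ exhibits $\bar{e}\bar{x}$ and $\bar{e}(\bar{1} - \bar{x})$ as units of $\bar{e}\bar{R}\bar{e}$ summing to $\bar{e}$, contradicting Lemma \ref{exe}(1).

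The only real obstacle is bookkeeping: Lemma \ref{exe} has been tailored precisely to substitute for the hypothesis ``$\bar{R}$ is UQ'' inside the argument scheme of Theorem \ref{semipotent}, so what remains is to verify that semipotence descends to $\bar{R}$ (as noted above) and that the lifting of idempotents modulo $J(R)$ (part of the potent hypothesis) supports the implicit passage between corners of $R$ and corners of $\bar{R}$ inside Lemma \ref{exe}. Neither step requires a new idea.
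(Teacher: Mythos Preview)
Your proposal is correct and follows essentially the same route as the paper: both invoke Theorem~\ref{semipotent} for $(2)\Leftrightarrow(3)\Leftrightarrow(4)\Leftrightarrow(6)$, note $(4)\Rightarrow(1)$ is trivial, and prove $(1)\Rightarrow(3)$ by running the reduced/Boolean argument on $\overline{R}$ with Lemma~\ref{exe} supplying the two contradictions. The only cosmetic difference is that the paper handles $(4)\Leftrightarrow(5)$ by citing \cite[Proposition~1.3(5)]{kosan1}, whereas you obtain $(5)\Leftrightarrow(3)$ by applying Theorem~\ref{semipotent} to $\overline{R}$ itself; both are valid and equally short.
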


\begin{proof}
It can be derived with the help of Theorem \ref{semipotent} that the implications (2) $\Leftrightarrow$ (3) $\Leftrightarrow$ (4) $\Leftrightarrow$ (6) are valid. Furthermore, (4) $\Leftrightarrow$ (5) utilizing \cite[Proposition 1.3 (5)]{kosan1}. It is also trivial that (4) $\Rightarrow$ (1). Therefore, it is sufficient to prove that (1) $\Rightarrow$ (3).

(1) $\Rightarrow$ (3). Since $R$ is a semipotent ring, we have $\overline{R} = R/J(R)$, which is also a semipotent ring. We will prove that $\overline{R}$ is a reduced ring.

Assume $x^2 = 0$ but $0 \neq x \in \overline{R}$. Then, by \cite[Theorem 2.1]{Levi}, there exists $\bar{e} \in \overline{R}$ such that $\bar{e}\overline{R}\bar{e} \cong M_2(S)$, where $S$ is a nonzero ring. This contradicts Lemma \ref{exe}(2). Therefore, $\overline{R}$ is a reduced ring.

Now, assume there exists $x \in \overline{R}$ such that $x - x^2 \neq 0$ in $\overline{R}$. Since $\overline{R}$ is a semipotent ring, there exists $e = e^2 \in \overline{R}$ such that $e \in (x - x^2)\overline{R}$. So, $e = (x - x^2)y$ for some $y \in \overline{R}$. Since $e$ is central, we have $e = \bar{e}x \cdot e(1-x) \cdot ey$, so $ex, e(1-x) \in U(e\overline{R}e)$. So we have, $ex + e(1-x) = e$, which contradicts Lemma \ref{exe}(1). Therefore, $\overline{R}$ is a Boolean ring.
\end{proof}

\begin{example}
Suppose $R$ is an Artinian ring. Then, the following conditions are equivalent:

(1) $R$ is a UQ ring.

(2) $R$ is a UJ ring.

(3) $R$ is a UU ring.
\end{example}

\begin{proof}
Using \cite[Corollary 6]{camilo}, we know that every Artinian ring is clean ring. Therefore, by Corollary \ref{exe 2}, we have that $R$ is a UQ ring uniquely when $R$ is a UJ ring. Also, since $R$ is Artinian, we have $J(R) \subseteq N(R)$. Hence, referring to \cite[Theorem 2.4(2)]{DL}, $R$ is a UU ring exactly when $R/J(R)$ is a UU ring. Consequently, taking into account Corollary \ref{exe 2}, $R$ is a UJ ring precisely when $R$ is a UU ring, as desired.
\end{proof}

\begin{corollary}
Let $R$ be finite ring. Then, the following conditions are equivalent:

(1) $R$ is a UQ ring.

(2) $R$ is a UJ ring.

(3) $R$ is a UU ring.
\end{corollary}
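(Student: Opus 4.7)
The plan is to observe that every finite ring is Artinian and then invoke the preceding Example directly. Since a finite ring $R$ has only finitely many left ideals, any descending chain of left ideals must stabilize, so $R$ is left Artinian. Hence the three equivalences for Artinian rings transfer verbatim to the finite setting.

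More concretely, I would first note that any finite ring is clean (in fact, since it is Artinian, it is semiperfect and therefore clean by \cite[Corollary 6]{camilo}), and in particular semipotent, so Corollary \ref{exe 2} gives the equivalence of being UQ, being UJ, and $R/J(R)$ being Boolean. Then, since any Artinian ring satisfies $J(R) \subseteq N(R)$ (the Jacobson radical of an Artinian ring is nilpotent), the UU condition for $R$ reduces to the UU condition for $R/J(R)$ via \cite[Theorem 2.4(2)]{DL}, which in turn coincides with $R/J(R)$ being Boolean. Combining these observations closes the triangle.

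Since no real work is needed beyond citing the Artinian Example itself, the proof is essentially a one-line deduction. I do not anticipate any substantive obstacle: the only thing to verify is that \emph{finite} implies \emph{Artinian}, which is immediate. Thus the proof proposal reduces to:

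\begin{proof}[Proof proposal]
Every finite ring is Artinian, hence the preceding Example applies and yields the equivalence of the three conditions.
\end{proof}
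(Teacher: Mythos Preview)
Your proposal is correct and matches the paper's approach exactly: the paper states this corollary immediately after the Artinian Example with no proof at all, evidently intending it as an immediate consequence of the fact that finite rings are Artinian. Your one-line deduction is precisely what is intended.
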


\begin{corollary}
For a ring $R$, the following conditions are equivalent:

(1) $R$ is a potent UQ ring.

(2) $R$ is a $J$-clean ring.
\end{corollary}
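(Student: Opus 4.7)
The plan is to exploit Corollary \ref{exe 2}, which already establishes that a potent ring $R$ is UQ if and only if $R/J(R)$ is Boolean, together with standard facts about $J$-clean rings and how cleanness interacts with lifting of idempotents.

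For the forward direction (1) $\Rightarrow$ (2), I would assume $R$ is a potent UQ ring and apply Corollary \ref{exe 2} to conclude that $\overline{R} := R/J(R)$ is a Boolean ring. Then for each $a \in R$ we have $\bar{a}^2 = \bar{a}$, i.e., $a^2 - a \in J(R)$. Since potency of $R$ includes the lifting of idempotents modulo $J(R)$ (as used implicitly in the proof of Lemma \ref{exe}), there exists an idempotent $e \in R$ with $\bar{e} = \bar{a}$, and then $a - e \in J(R)$. This yields the desired $J$-clean decomposition $a = e + (a-e)$.

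For the reverse direction (2) $\Rightarrow$ (1), I would start by converting a $J$-clean decomposition into a clean one: writing $a = e + j$ with $e \in Id(R)$ and $j \in J(R)$, rewrite $a = (1 - e) + ((2e - 1) + j)$. Since $(2e - 1)^2 = 1$, the element $2e-1$ is a unit, so $(2e-1) + j \in U(R) + J(R) \subseteq U(R)$; thus $R$ is a clean ring, hence exchange, and in particular semipotent with idempotents lifting modulo $J(R)$, meaning $R$ is potent. To see $R$ is UQ, it suffices (by Example \ref{first ex}(3)) to verify $R$ is UJ: given $u \in U(R)$ with $J$-clean decomposition $u = e + j$, the identity $e = u - j = u(1 - u^{-1}j)$ displays $e$ as a product of two units, so $e \in U(R) \cap Id(R) = \{1\}$, forcing $u - 1 = j \in J(R)$.

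The main obstacle, such as it is, is purely bookkeeping: checking that ``$J$-clean $\Rightarrow$ potent'' really delivers both semipotency and the lifting of idempotents modulo $J(R)$, for which I would appeal to the standard fact that clean rings are exchange (and exchange rings enjoy both properties). Once this is granted, the equivalence reduces to one application of Corollary \ref{exe 2} together with the unit-idempotent trick relating clean and $J$-clean decompositions.
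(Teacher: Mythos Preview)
Your proof is correct and follows essentially the same approach as the paper. For (1) $\Rightarrow$ (2) you and the paper argue identically via Corollary \ref{exe 2} and idempotent lifting; for (2) $\Rightarrow$ (1) the paper simply declares the direction ``straightforward'', while you spell out the standard $J$-clean $\Rightarrow$ clean $\Rightarrow$ exchange $\Rightarrow$ potent chain and the $e = u(1 - u^{-1}j)$ trick to get UJ (hence UQ), which is exactly the intended elaboration.
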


\begin{proof}
It is straightforward that (2) assures (1).

To show the opposite that (1) implies (2), suppose $R$ is a potent UQ ring. Applying Corollary \ref{exe 2}, we know that $R/J(R)$ is Boolean. Therefore, for each $a \in R$, we have $a - a^2 \in J(R)$. Since $R$ is a potent ring, there exists an idempotent $e \in R$ such that $a - e \in J(R)$. Thus, $R$ is a J-clean ring, as wanted.
\end{proof}

\section{Group Rings}

We begin here with the following preliminaries.

\begin{lemma} \label{QNR}
Let $R$ be a ring, $I$ an ideal of $R$ such that $I \subseteq J(R)$, and $\overline{R} = R/I$. Then, the following are valid:

(1) For every $\bar{q} \in QN(\overline{R})$, we have $q \in QN(R)$.

(2) For every $\bar{q} \in QN(\overline{R})$ and $p \in I$, we have $q + p \in QN(R)$.
\end{lemma}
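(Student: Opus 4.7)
The plan is to reduce (2) to (1), so the core task is establishing (1). The key fact I will invoke is the standard lifting-of-units lemma: if $I \subseteq J(R)$, then for any $r \in R$ one has $r \in U(R)$ if and only if $\bar r \in U(\overline R)$. Indeed, given a lift $s \in R$ of the inverse of $\bar r$, the products $rs$ and $sr$ differ from $1$ by elements of $I \subseteq J(R)$, hence lie in $1 + J(R) \subseteq U(R)$; from $rs, sr \in U(R)$ both one-sided inverses for $r$ exist, so $r \in U(R)$. This observation is completely routine but is the only genuinely substantive ingredient in the lemma.

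With this in hand, the proof of (1) reduces to a short verification of the definition of $QN(R)$. I will fix $q \in R$ with $\bar q \in QN(\overline R)$ and take an arbitrary $x \in R$ with $xq = qx$. Passing to $\overline R$, the commutation relation is preserved, so $\bar x \bar q = \bar q \bar x$; then quasinilpotence of $\bar q$ gives $\overline{1 - qx} = 1 - \bar q \bar x \in U(\overline R)$. Applying the lifting fact, I conclude that $1 - qx \in U(R)$. Since $x$ was an arbitrary element of $R$ commuting with $q$, this yields $q \in QN(R)$, which is exactly (1).

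Part (2) will then follow immediately from (1): given $p \in I$, the image of $q + p$ in $\overline R$ is just $\bar q$, which already lies in $QN(\overline R)$; so (1) applied to the element $q + p$ shows $q + p \in QN(R)$. The main — indeed only — obstacle is to notice that quasinilpotence is defined through a commutation hypothesis, and that commutation in $R$ passes to $\overline R$; once this is observed, the unit-lifting lemma does all the remaining work, and no further difficulty arises.
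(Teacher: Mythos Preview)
Your proof is correct and follows essentially the same approach as the paper: for (1) you pass the commutation relation to $\overline{R}$, use quasinilpotence there, and lift the resulting unit back to $R$ via $I \subseteq J(R)$; for (2) you observe that $q+p$ and $q$ have the same image in $\overline{R}$ and apply (1). The only difference is that you spell out the unit-lifting step more explicitly than the paper does.
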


\begin{proof}
(1) Let us assume $\bar{q} \in QN(\overline{R})$. If $ pq = qp$, then one follows that $\bar{q}\bar{p} = \bar{p}\bar{q}$. Therefore, $\bar{1} - \bar{p}\bar{q} \in U(\overline{R})$ implying that $1 - pq \in U(R)$. Hence, $q \in QN(R)$.

(2) For each $p \in I$, we can write $q + I = (q + p) + I \in QN(\overline{R})$. Thus, using property (1), we can conclude that $p + q \in QN(R)$.
\end{proof}

\begin{corollary}\label{factor UQ}
Let $R$ be a ring, $I \subseteq J(R)$, and $\overline{R} = R/I$ be a UQ ring. Then, $R$ is a UQ ring.
\end{corollary}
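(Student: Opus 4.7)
The plan is to verify both inclusions of the identity $U(R) = 1 + QN(R)$. The easier direction, $1 + QN(R) \subseteq U(R)$, is immediate from the definition of quasinilpotence: for any $q \in QN(R)$, applying the defining condition to the central element $x = -1$, which trivially commutes with $q$, yields that $1 - q(-1) = 1 + q$ lies in $U(R)$. So this inclusion requires no use of the hypothesis that $I \subseteq J(R)$ or that $\overline{R}$ is UQ.

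For the reverse inclusion, I would start with an arbitrary unit $u \in U(R)$. Since $I \subseteq J(R)$, the natural projection $R \twoheadrightarrow \overline{R}$ sends units to units, so $\overline{u} \in U(\overline{R})$. By the hypothesis that $\overline{R}$ is a UQ ring, we have $U(\overline{R}) = \overline{1} + QN(\overline{R})$, so $\overline{u} - \overline{1} = \overline{u - 1} \in QN(\overline{R})$. Now I would invoke Lemma \ref{QNR}(1) with $q := u - 1$: its image in $\overline{R}$ is quasinilpotent, hence $q \in QN(R)$. This gives $u - 1 \in QN(R)$, i.e., $u \in 1 + QN(R)$, closing the argument.

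The main obstacle, and really the only substantive content, is packaged inside Lemma \ref{QNR}(1), which permits quasinilpotence to be pulled back along a reduction modulo an ideal contained in the Jacobson radical. Once that lemma is available, no lifting of units or idempotents is needed and the corollary reduces to the two short observations above.
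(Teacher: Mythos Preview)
Your proof is correct and follows essentially the same route as the paper's, relying on Lemma~\ref{QNR}(1) to pull quasinilpotence back along $R \to R/I$. In fact your argument is slightly tidier: by choosing the lift $q = u - 1$ from the outset you avoid the paper's detour through an arbitrary lift of $\bar u - \bar 1$ and the subsequent appeal to Lemma~\ref{QNR}(2) to absorb the correction term from $I$.
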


\begin{proof}
For any $u \in U(R)$, we have $\bar{u} \in U(\overline{R})$. Since $\overline{R}$ is a UQ ring, we can write $\bar{u} = \bar{1} + \bar{q}$, where $\bar{q} \in QN(\overline{R})$. Now, Lemma \ref{QNR}(1) is applicable to get that $q \in QN(R)$. Consequently, $u - (1 + q) \in I$. Thus, there exists $p \in I$ such that $u = 1 + (p + q)$. But, consulting with Lemma \ref{QNR}(2), we deduce $p + q \in QN(R)$. Therefore, $R$ is a UQ-ring, as stated.
\end{proof}

\begin{lemma}\label{sequ}
Let $R$ be a UQ-ring, $a \in U(R)$, and $f_n = 1 + a + a^2 + \cdots + a^n$. Then, the following two points occur:

(1) If $n$ is even, then $f_n \in U(R)$.

(2) If $n$ is odd, then $f_n \in QN(R)$.
\end{lemma}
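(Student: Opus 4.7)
The plan is to proceed by induction on $n$ using the recurrence $f_n = 1 + a f_{n-1}$ (with $f_0 = 1$). The base case $n=0$ is even and yields $f_0 = 1 \in U(R)$. The key observation driving the induction is that $f_{n-1}$ is a polynomial in $a$, hence commutes with $a$, which will allow a direct appeal to the very definition of $QN(R)$ in the even step.

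Suppose the statement holds for $n-1$. If $n$ is odd, then $n-1$ is even and, by induction, $f_{n-1} \in U(R)$. Hence $a f_{n-1} \in U(R) = 1 + QN(R)$, so $a f_{n-1} = 1 + q'$ for some $q' \in QN(R)$, giving
\[ f_n = 1 + a f_{n-1} = 2 + q'. \]
By Lemma \ref{2 in J(R)}(1) we have $2 \in J(R) \subseteq QN(R)$, and clearly $2 \in Z(R)$. Lemma \ref{close prod}(2) then delivers $f_n = q' + 2 \in QN(R)$, which handles this case.

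If $n$ is even and $n \geq 2$, then $n-1$ is odd and, by induction, $f_{n-1} \in QN(R)$. Since $-a$ commutes with $f_{n-1}$, applying the definition of quasinilpotent to $f_{n-1}$ with the element $-a$ shows that
\[ 1 - f_{n-1}(-a) = 1 + f_{n-1} a = 1 + a f_{n-1} = f_n \]
is a unit, which is what we need.

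The only subtle point is securing the commuting companion needed to invoke the definition of $QN(R)$ in the even step, but this is transparent because $f_{n-1}$ is a polynomial in $a$. The remainder is a straightforward use of the UQ hypothesis together with Lemmas \ref{close prod} and \ref{2 in J(R)}, so no serious obstacle is anticipated.
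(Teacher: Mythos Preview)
Your induction is correct. The odd step is essentially the paper's own argument for~(2), only with the role of $2\in J(R)$ and Lemma~\ref{close prod}(2) made explicit where the paper simply writes $1+U(R)=QN(R)$. The even step, however, differs from the paper: instead of inducting, the paper handles all even $n$ at once via the factorization
\[
f_n \;=\; 1 + (1+a)\bigl(a+a^3+\cdots+a^{n-1}\bigr),
\]
observes $1+a\in QN(R)$ (since $-a\in U(R)$), and then cites \cite[Proposition~2.10]{Cui} to conclude that the product lies in $QN(R)$, whence $f_n\in 1+QN(R)=U(R)$. Your route avoids this external citation entirely: once $f_{n-1}\in QN(R)$ is known, applying the bare definition of quasinilpotent with the commuting element $-a$ immediately yields $f_n=1+af_{n-1}\in U(R)$. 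So your argument is more self-contained and uses only ingredients already present in the paper, at the modest cost of intertwining the two parities in an induction; the paper's version is non-inductive for~(1) but leans on an outside reference.
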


\begin{proof}
(1) Suppose $n$ is even. We can rewrite $f_n$ as $$f_n = 1+ (1 + a)(a + a^3 + \cdots + a^{n-1}).$$ Since $a \in U(R)$, we know $a+1 \in QN(R)$, so that \cite[Proposition 2.10]{Cui} can be applied to infer that $$(1 + a)(a + a^3 + \cdots + a^{n-1}) \in QN(R).$$ Hence, $f_n \in U(R)$.

(2) Suppose $n$ is odd. We can rewrite $f_n$ as $f_n = 1+ af_{n-1}$. So, the usage of (1) guarantees that $f_{n-1} \in U(R)$, and therefore $$f_n = 1 + a f_{n-1} \in 1 + U(R) = QN(R),$$ as asked for.
\end{proof}

We now have all the ingredients necessary for proving the following major statement.

\begin{theorem}\label{UQgroupring}
Let $R$ be a ring and let $G$ be a group. If $RG$ is a UQ-ring, then $R$ is a UQ-ring and $G$ is a $2$-group.
\end{theorem}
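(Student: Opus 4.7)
The plan is to prove the two conclusions separately.

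For the UQ conclusion on $R$, I would realise $R$ as a good subring of $RG$ via the natural embedding $r \mapsto r \cdot 1_G$. The augmentation $\varepsilon : RG \to R$ sending $\sum r_g g \mapsto \sum r_g$ is a ring homomorphism restricting to the identity on $R$; if $r \in R$ happens to be a unit in $RG$ with inverse $v$, then applying $\varepsilon$ to $rv = vr = 1$ shows that $\varepsilon(v)$ is an inverse for $r$ in $R$. So $U(RG) \cap R = U(R)$, and Lemma~\ref{subring} delivers that $R$ is UQ.

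For the $2$-group conclusion, fix $g \in G$, set $H = \langle g \rangle$, and begin by showing that $RH$ is a good subring of $RG$. Taking a right transversal $T$ of $H$ in $G$ with $1 \in T$ gives $RG = \bigoplus_{t \in T} RH \cdot t$ as a left $RH$-module, and if $u \in RH$ satisfies $uv = 1$ in $RG$ with $v = \sum_t v_t t$, then matching the $t = 1$ component yields $u v_1 = 1$ in $RH$; a symmetric argument using a left transversal produces a left inverse of $u$ in $RH$. Thus $U(RG) \cap RH = U(RH)$ and Lemma~\ref{subring} makes $RH$ a UQ ring.

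Now I would apply Lemma~\ref{sequ} inside $RH$ to $a = g \in U(RH)$, using the identity $(g - 1)f_k = g^{k+1} - 1$ for $f_k = 1 + g + \cdots + g^k$. If $g$ has finite order $n \ge 2$ with $n$ odd, then $k = n - 1$ is even, so Lemma~\ref{sequ}(1) makes $f_k \in U(RH)$, and $(g-1)f_k = 0$ then forces $g = 1$, a contradiction. Applying this reasoning to $g^{n/p}$ for any odd prime $p \mid n$ removes all odd prime divisors, so $n$ must be a power of $2$. If instead $g$ has infinite order, then $RH \cong R[x, x^{-1}]$ with $x$ central, and Lemma~\ref{sequ}(1) forces $f_2 = 1 + x + x^2$ to be a unit. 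I would rule this out by a direct coefficient argument: from $(1 + x + x^2)\sum b_i x^i = 1$ one reads off the recursion $b_i + b_{i-1} + b_{i-2} = \delta_{i, 0}$ for every $i \in \Z$, and inspecting the largest index $M$ with $b_M \ne 0$ (and symmetrically the smallest) yields $b_M = 0$, collapsing the putative inverse to $0$ and contradicting $f_2 \ne 0$.

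The main obstacle is the infinite order case: the finite order part drops out cleanly from Lemma~\ref{sequ} via the annihilation $(g - 1)f_{n-1} = 0$, but for infinite order no such shortcut is available, so one has to rule out by hand that $1 + x + x^2$ is a unit of the central Laurent polynomial ring $R[x, x^{-1}]$.
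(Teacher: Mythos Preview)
Your argument is correct and follows essentially the same route as the paper's own proof: pass to the good subrings $R$ and $R\langle g\rangle$ via Lemma~\ref{subring}, then use Lemma~\ref{sequ}(1) together with $(g-1)f_{p-1}=0$ to exclude elements of odd prime order, and a highest/lowest-coefficient inspection on an inverse of $1+g+g^2$ to exclude the infinite-order case. The only differences are cosmetic: you supply the augmentation and transversal arguments that justify the ``good subring'' claims (which the paper simply asserts), and your closing line would read more cleanly as ``$b_M=0$ contradicts the choice of $M$'' rather than ``contradicting $f_2\neq 0$''.
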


\begin{proof}
Since $R$ is a good subring of $RG$, it follows directly from Lemma \ref{subring} that $R$ is a UQ-ring.

Let us now assume $g \in G$. Then, $R\left\langle g \right\rangle$ is also a good subring of $RG$. Thus, $R\left\langle g \right\rangle$ is a UQ-ring. If, however, we assume in a way of contradiction that $\left\langle g \right\rangle$ is an infinite cyclic group, then the application of Lemma \ref{sequ}(1) insures that $1 + g + g^2 \in U(RG)$. Hence, there exist integers, say $a \le b$, and $c_i \in R$, with the property
$c_a \neq 0 \neq c_b$ such that $$1 = \sum_{i=a}^{b} c_ig^i (1 + g + g^2) = c_ag^a + \sum_{a+1}^{b+1} d_ig^i + c_bg^{b+2}$$ for suitable $d_i \in R$, $i \in  \mathbb{Z}$. It is not too hard to verify that this gives the expected contradiction.

So, if there exists an element of $G$ such that its order is divisible by an odd prime, say $p$, then there will exist an element $g$ of $G$ with ${\rm order}(g) = p$. But since $\sum_{i=0}^{p-1}g^i \in U(RG)$ in view of Lemma \ref{sequ}(1), and obviously $(1-g)\sum_{i=0}^{p-1}g^i=0$, we detect that $1 -g = 0$, i.e., $g=1$, again a contradiction, as needed.
\end{proof}

\begin{lemma} \label{Delta subset Jaco}
Let $G$ be a locally finite $2$-group and let $R$ be a UQ ring. Then, $\Delta(RG) \subseteq J(RG)$.
\end{lemma}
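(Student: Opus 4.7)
The plan is to first reduce to the finite case, then attack the finite case by induction using a central involution. Since $G$ is locally finite, any $\beta \in \Delta(RG)$ has finite support and hence lies in $\Delta(RH)$ for the finite $2$-subgroup $H \leq G$ generated by that support. Because $\Delta(RG)$ is a two-sided ideal, proving $\Delta(RG) \subseteq J(RG)$ is equivalent to showing $1 + \beta \in U(RG)$ for every such $\beta$, and a unit of $RH$ is automatically a unit of $RG$. So the assertion reduces to: for every finite $2$-group $H$ and every UQ ring $R$, $\Delta(RH) \subseteq J(RH)$.

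By Lemma \ref{2 in J(R)}, $2 \in J(R)$. My first step for the finite case is the classical auxiliary containment $J(R) \cdot RH \subseteq J(RH)$ whenever $H$ is finite. Writing $\alpha \in J(R) \cdot RH$ in the free $R$-basis $H$ as $\sum_h a_h h$ with $a_h \in J(R)$, the matrix $M$ of left multiplication by $\alpha$ on $RH$ has entries in $J(R)$; since $M_{|H|}(J(R)) \subseteq J(M_{|H|}(R))$, the matrix $1 - M$ is a unit in $M_{|H|}(R)$, so left multiplication by $1 - \alpha$ is bijective on $RH$ and delivers a right inverse of $1 - \alpha$ inside $RH$. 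The mirror argument with right multiplication produces a left inverse, so $1 - \alpha \in U(RH)$. In particular $2 \in J(RH)$.

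Now induct on $|H|$; the base $|H| = 1$ is trivial. For $|H| \geq 2$, the nontrivial $2$-group $Z(H)$ contains an involution $z$. Set $N = \langle z \rangle$, and let $\pi \colon RH \to R(H/N)$ be the natural surjection with kernel $I = (z-1)RH$, a two-sided ideal by the centrality of $z$. The crux is to show $I \subseteq J(RH)$: for $\gamma = (z-1)\alpha \in I$, centrality of $z - 1$ and the identity $(z-1)^2 = z^2 - 2z + 1 = -2(z-1)$ give $\gamma^2 = (z-1)^2 \alpha^2 = -2\gamma\alpha$, and hence
$$(1+\gamma)(1-\gamma) \;=\; 1 - \gamma^2 \;=\; 1 + 2\gamma\alpha \;\in\; 1 + J(RH) \;\subseteq\; U(RH),$$
supplying a right inverse for $1+\gamma$; centrality of $z - 1$ also yields $\alpha\gamma = \gamma\alpha$, so $(1-\gamma)(1+\gamma)$ equals the same expression and supplies a left inverse. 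Thus $1 + I \subseteq U(RH)$ and $I \subseteq J(RH)$. Jacobson correspondence then gives $J(RH)/I = J(R(H/N))$, and the inductive hypothesis $\Delta(R(H/N)) \subseteq J(R(H/N))$ pulls back through $\pi$ to $\Delta(RH) \subseteq J(RH)$, closing the induction.

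The main obstacle I anticipate is the auxiliary containment $J(R) \cdot RH \subseteq J(RH)$: the element $z - 1$ is \emph{not} nilpotent on its own (only $(z-1)^2$ lies in $2 \cdot RH$), so the inversion of $1+\gamma$ in the induction step collapses unless $2 \in J(RH)$ is already available. Once that containment is secured, the remainder of the argument is the standard Jacobson lifting through the short exact sequence $0 \to I \to RH \to R(H/N) \to 0$.
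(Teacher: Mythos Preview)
Your proof is correct but follows a different route from the paper's. The paper passes first to $\overline{R}=R/J(R)$, observes from Lemma~\ref{2 in J(R)} that $\bar 2=0\in N(\overline{R})$, and then invokes Connell's result (\cite[Proposition~16]{con}) to conclude that $\Delta(\overline{R}G)$ is a nil-ideal, hence contained in $J(\overline{R}G)$. It then cites \cite[Lemma~4]{chennot} for $J(R)G\subseteq J(RG)$ and lifts the containment back to $RG$ via the isomorphism $\overline{R}G\cong RG/J(R)G$.

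You instead work entirely inside $RH$ for finite $H$: you prove the auxiliary inclusion $J(R)\cdot RH\subseteq J(RH)$ by hand via the regular matrix representation (rather than citing \cite{chennot}), and then, using only $2\in J(RH)$, you carry out an induction on $|H|$ through a central involution and the identity $(z-1)^2=-2(z-1)$. In effect you are reproving the relevant instance of Connell's proposition directly, without ever passing to $R/J(R)$. Your approach is longer but entirely self-contained, needing nothing from outside the paper except the standard fact $J(M_n(R))=M_n(J(R))$; the paper's argument is shorter because it outsources both substantive steps to the literature. Both arguments use the UQ hypothesis only through $2\in J(R)$.
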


\begin{proof}
Suppose that $\overline{R}$ equals to $R/J(R)$. Then, employing Lemma \ref{2 in J(R)}, $\bar{2} \in N(\overline{R})$. Therefore, with \cite[Proposition 16]{con} in hand, we derive $\Delta(\overline{R}G) \subseteq J(\overline{R}G)$.

On the other hand, with \cite[Lemma 4]{chennot} in mind, we deduce $J(R)G \subseteq J(RG)$, so one checks that $$J(\overline{R}G)=J(R/J(R)G) \cong J(RG/J(R)G)=J(RG)/J(R)G.$$
However, for each $f=\sum a_g(1-g) \in \Delta(RG)$, it is readily seen that $$\sum \Bar{a_g}(1-g) \in  \Delta(\overline{R}G) \subseteq J(RG)/J(R)G.$$ Hence, there exists $j \in J(RG)$ such that $f-j \in J(R)G \subseteq J(RG)$, and because $J(RG)$ is an ideal, we freely may conclude that $f$ belongs to $J(RG)$, completing the arguments after all.
\end{proof}

We are now in a position to prove validity of the following sufficient condition.

\begin{proposition}\label{locallyfinite}
If $R$ is a UQ ring and $G$ is a locally finite $2$-group, then $RG$ is a UQ ring.
\end{proposition}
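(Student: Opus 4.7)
My plan is to reduce this statement directly to the two preceding results, namely Lemma~\ref{Delta subset Jaco} and Corollary~\ref{factor UQ}, by identifying the augmentation ideal as the right ideal to quotient out by. Recall that $\Delta(RG)$ denotes the augmentation ideal of $RG$, that is, the kernel of the augmentation map $\varepsilon\colon RG\to R$ sending $\sum_{g\in G}a_g g$ to $\sum_{g\in G}a_g$. In particular, $\Delta(RG)$ is a two-sided ideal of $RG$ and one has the canonical ring isomorphism
$$RG/\Delta(RG)\;\cong\;R.$$

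With that identification in hand, the argument is essentially immediate. First I would invoke Lemma~\ref{Delta subset Jaco}, which tells us precisely that $\Delta(RG)\subseteq J(RG)$ under exactly the hypotheses we are given ($R$ a UQ ring and $G$ a locally finite $2$-group). Next, since the hypothesis assumes that $R$ itself is a UQ ring, and we have just observed that $RG/\Delta(RG)\cong R$, the quotient $RG/\Delta(RG)$ is UQ as well. Finally, applying Corollary~\ref{factor UQ} with the ring $RG$ in the role of $R$ and with the ideal $\Delta(RG)$ in the role of $I$, we conclude at once that $RG$ is a UQ ring.

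Because both of the key ingredients are already established, there is no substantive obstacle left; the only thing to verify is that the hypotheses of Corollary~\ref{factor UQ} are genuinely met, and this is exactly the content of the two sentences above. In other words, all the real work has been done in proving Lemma~\ref{Delta subset Jaco} (where the fact that $2\in J(R)$ for UQ rings was combined with \cite[Proposition 16]{con} and \cite[Lemma 4]{chennot} to push the augmentation ideal into the Jacobson radical), and the present proposition is just the packaging of that lemma into a statement about UQ rings via the lifting criterion in Corollary~\ref{factor UQ}. The proof itself should therefore occupy only a couple of lines.
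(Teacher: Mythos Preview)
Your proposal is correct and follows essentially the same approach as the paper's own proof: invoke Lemma~\ref{Delta subset Jaco} to get $\Delta(RG)\subseteq J(RG)$, use the isomorphism $RG/\Delta(RG)\cong R$ to see the quotient is UQ, and then apply Corollary~\ref{factor UQ}. Your write-up is in fact slightly more explicit than the paper's, since you spell out what $\Delta(RG)$ is and why the quotient identification holds.
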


\begin{proof}
With Lemma \ref{Delta subset Jaco} into account, we have $\Delta(RG) \subseteq J(RG)$ and, on the other side, we have $RG/\Delta(RG) \cong R$. Therefore, $RG/\Delta(RG)$ is a UQ ring. Consequently, Lemma \ref{factor UQ} forces that the ring $RG$ is a UQ ring.
\end{proof}

It has been proven in \cite{kosangroup} that if $R$ is a UJ ring and $G$ a locally finite $2$-group, then $RG$ is a UJ ring. We now attempting to reprove the same result approaching a different method which is definitely more concise and transparent.

\begin{theorem}\label{UJgroupring}
If $R$ is a UJ ring and $G$ is a locally finite $2$-group, then $RG$ is a UJ ring.
\end{theorem}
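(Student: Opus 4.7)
The plan is to imitate the proof of Proposition~\ref{locallyfinite} essentially verbatim, substituting UJ for UQ throughout. Two supporting facts are needed: a UJ analogue of Lemma~\ref{factor UQ} (that UJ-ness lifts through quotients by ideals contained in $J(R)$), and a UJ analogue of Lemma~\ref{Delta subset Jaco} (that $\Delta(RG) \subseteq J(RG)$). Both turn out to be at least as easy as their UQ counterparts.

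First, for the lifting step, I would prove: if $I \subseteq J(R)$ and $R/I$ is UJ, then $R$ is UJ. Given $u \in U(R)$, its image $\bar u \in U(R/I)$ can be written as $\bar u = \bar 1 + \bar j$ for some $j \in J(R)$, using $J(R/I) = J(R)/I$. Then $u - 1 - j \in I \subseteq J(R)$, so $u \in 1 + J(R)$, as needed.

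Second, for the $\Delta$-containment, I first note that every UJ ring satisfies $2 \in J(R)$: applying the defining identity $U(R) = 1 + J(R)$ to $u = -1$ yields $-2 \in J(R)$. Setting $\overline{R} = R/J(R)$, this forces $\bar 2 = 0 \in N(\overline{R})$, and since $G$ is a locally finite $2$-group, \cite[Proposition~16]{con} gives $\Delta(\overline{R}G) \subseteq J(\overline{R}G)$. Combining this with $J(R)G \subseteq J(RG)$ from \cite[Lemma~4]{chennot} and the natural isomorphism $J(\overline{R}G) \cong J(RG)/J(R)G$, the same element chase used in Lemma~\ref{Delta subset Jaco} shows that every $f = \sum a_g(1-g) \in \Delta(RG)$ lies in $J(RG)$.

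With both tools in hand, the conclusion is immediate from the standard isomorphism $RG/\Delta(RG) \cong R$: the target is UJ by hypothesis and $\Delta(RG) \subseteq J(RG)$ by the second step, so the lifting lemma forces $RG$ to be UJ. The only potentially non-routine step is the $\Delta$-containment, but its sole ring-theoretic input is $2 \in J(R)$ --- automatic for UJ rings --- so no real obstacle should arise; the argument is genuinely more concise than the original approach in \cite{kosangroup}, exactly as advertised.
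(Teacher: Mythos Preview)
Your proof is correct and uses essentially the same ingredients as the paper's: $2\in J(R)$, \cite[Proposition~16]{con}, \cite[Lemma~4]{chennot}, and the lifting of the UJ property through ideals contained in the Jacobson radical (which the paper cites as \cite[Proposition~1.3(5)]{kosan1} and you reprove directly). The only difference is organizational: you lift in a single step $R\cong RG/\Delta(RG)\to RG$ after establishing $\Delta(RG)\subseteq J(RG)$ exactly as in Lemma~\ref{Delta subset Jaco}, whereas the paper lifts in two steps, first $\overline{R}\to \overline{R}G$ via $\Delta(\overline{R}G)$ and then $\overline{R}G\to RG$ via $J(R)G$ --- a cosmetic rearrangement of the same argument.
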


\begin{proof}
Suppose $R$ is a UJ ring and $G$ is a locally finite $2$-group. Now, Lemma \ref{2 in J(R)} allows us to have $2 \in J(R)$. Thus, assuming $\overline{R} = R/J(R)$, it must be that $\bar{2} \in N(\overline{R})$. Therefore, in virtue of \cite[Proposition 16]{con}, $\Delta(\overline{R}G)$ is a nil-ideal guaranteeing the inclusion $\Delta(\overline{R}G) \subseteq J(\overline{R}G)$. But since $\overline{R}G/\Delta(\overline{R}G) \cong \overline{R}$, the utilization of \cite[Proposition 1.3(5)]{kosan1} enables us that $\overline{R}G$ is a UJ ring.

Furthermore, because of the isomorphism $\overline{R}G \cong RG/J(R)G$, and with the aid of \cite[Lemma 4]{chennot}, we obtain the inclusion $J(R)G \subseteq J(RG)$. Hence, thanks to Proposition \cite[Proposition 1.3(5)]{kosan1}, $RG$ is a UJ ring, as pursued.
\end{proof}

\section{Some Specific Classes of UQ Rings}

In this section, we first introduce the rings $A_{n,m}(R)$, $B_{n,m}(R)$, and $C_{n}(R)$ for each $m, n \in \mathbb{N}$, and using these rings, we present certain new classes of UQ rings.

In fact, letting $R$ be an arbitrary ring, we define the aforementioned rings as follows:
\begin{align*}
&A_{n,m}(R) =R[x,y | x^n=xy=y^m=0], \\
&B_{n,m}(R) =R\left\langle x,y | x^n=xy=y^m=0  \right\rangle, \\
&C_{n}(R) =R \langle x,y | x^2=\underbrace{xyxyx...}_{\text{$n-1$ words}}=y^2=0 \rangle.
\end{align*}

In another vein, Wang introduced in \cite{wang} the following matrix ring $S_{n,m}(R)$. Supposing $R$ is a ring, the matrix ring $S_{n,m}(R)$ can be represented as

$$\left\{ \begin{pmatrix}
   a & b_1 & \cdots & b_{n-1} & c_{1n} & \cdots & c_{1 n+m-1}\\
   \vdots  & \ddots & \ddots & \vdots & \vdots & \ddots & \vdots \\
   0 & \cdots & a & b_1 & c_{n-1,n} & \cdots & c_{n-1,n+m-1} \\
   0 & \cdots & 0 & a & d_1 & \cdots & d_{m-1} \\
   \vdots  & \ddots & \ddots & \vdots & \vdots & \ddots & \vdots \\
   0 & \cdots & 0 & 0  & \cdots & a & d_1 \\
   0 & \cdots & 0 & 0  & \cdots & 0 & a
\end{pmatrix}\in T_{n+m-1}(R) : a, b_i, d_j,c_{i,j} \in R \right\}$$

Also, let

$$T_{n,m}(R)=\left\{ \left(\begin{array}{@{}c|c@{}}
  \begin{matrix}
  a & b_1 & b_2 & \cdots & b_{n-1} \\
  0 & a & b_1 & \cdots & b_{n-2} \\
  0 & 0 & a & \cdots & b_{n-3} \\
  \vdots & \vdots & \vdots & \ddots & \vdots \\
  0 & 0 & 0 & \cdots & a
  \end{matrix}
  & \bigzero \\
\hline
  \bigzero &
  \begin{matrix}
  a & c_1 & c_2 & \cdots & c_{m-1} \\
  0 & a & c_1 & \cdots & c_{m-2} \\
  0 & 0 & a & \cdots & c_{m-3} \\
  \vdots & \vdots & \vdots & \ddots & \vdots \\
  0 & 0 & 0 & \cdots & a
  \end{matrix}
\end{array}\right)\in T_{n+m}(R) : a, b_i,c_j \in R \right\} $$

and

$$U_{n}(R)=\left\{ \begin{pmatrix}
   a & b_1 & b_2 & b_3 & b_4 & \cdots & b_{n-1} \\
   0 & a & c_1 & c_2 & c_3 & \cdots & c_{n-2} \\
   0 & 0 & a & b_1 & b_2 & \cdots & b_{n-3} \\
   0 & 0 & 0 & a & c_1 & \cdots & c_{n-4} \\
   \vdots & \vdots & \vdots & \vdots &  &  & \vdots \\
   0 &0 & 0 & 0 & 0 & \cdots & a
\end{pmatrix}\in T_{n}(R) :  a, b_i, c_j \in R \right\}$$

We are now ready to demonstrate in the following assertion the relationships between these rings. Specifically, we are able to establish the following technical claim.

\begin{lemma} \label{section 3 lemma}
Let $R$ be a ring and $m, n \in \mathbb{N}$. Then, the next three isomorphisms of rings are fulfilled:

(1) $A_{n,m}(R) \cong T_{n,m}(R)$.

(2) $B_{n,m}(R) \cong S_{n,m}(R)$.

(3) $C_{n}(R) \cong U_{n}(R)$.
\end{lemma}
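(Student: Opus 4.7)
My plan is to establish all three isomorphisms by the same procedure: construct explicit generators $X$ and $Y$ inside each target matrix ring that satisfy the defining relations of the source presentation, invoke the universal property of the corresponding quotient of the free (or polynomial) $R$-algebra to obtain an $R$-algebra homomorphism, and then verify bijectivity by exhibiting matching $R$-module bases on both sides. In each of the three cases the source algebra is presented by generators and relations, so the existence of a homomorphism is automatic once the chosen matrices are shown to satisfy those relations; bijectivity then reduces to a rank count together with an identification of how the image of the monomial basis sweeps out the free matrix entries.

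For part (1), commutativity together with $xy = 0$ forces $yx = 0$, so a natural free $R$-basis of $A_{n,m}(R)$ is $\{1, x, \ldots, x^{n-1}, y, y^2, \ldots, y^{m-1}\}$ of rank $n+m-1$, exactly matching the number of free parameters of $T_{n,m}(R)$. I would take $X$ (resp.\ $Y$) to be the shift matrix supported on the upper-left (resp.\ lower-right) Toeplitz block; the disjointness of the two diagonal blocks immediately forces $XY = YX = 0$, $X^n = 0$, and $Y^m = 0$, and the images $I, X, \ldots, X^{n-1}, Y, \ldots, Y^{m-1}$ parameterize exactly the free entries $a, b_1, \ldots, b_{n-1}, c_1, \ldots, c_{m-1}$ of $T_{n,m}(R)$.

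For part (2), the only new feature is that $yx \ne 0$. A reduced monomial basis of $B_{n,m}(R) = R\langle x,y \rangle/(x^n, xy, y^m)$ is $\{y^j x^i : 0 \le j < m,\ 0 \le i < n\}$, of rank $nm = 1 + (n-1) + (m-1) + (n-1)(m-1)$, which is the number of free parameters of $S_{n,m}(R)$. I would take $X$ and $Y$ as shifts supported in the two Toeplitz blocks respectively, arranged so that $XY = 0$ (the image of $Y$ lies outside the source of $X$) while the products $Y^j X^i$ populate the off-diagonal $c$-entries; bijectivity then follows from matching ranks. For part (3), the relations $x^2 = y^2 = 0$ together with the single length-$(n-1)$ alternating relation $xyxy\cdots = 0$ suggest defining $X$ and $Y$ to carry $1$'s on the first super-diagonal restricted to rows of opposite parities, mirroring the staggered $b$/$c$ pattern of $U_n(R)$. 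Row/column parity mismatch then makes $X^2 = Y^2 = 0$ automatic; an alternating product of length $k$ lands on the $k$-th super-diagonal restricted to rows of the starting parity; and the length-$(n-1)$ alternating product corresponding to $x$ has no valid support inside the $n \times n$ matrix, so the relation holds. The reduced-word basis of $C_n(R)$ has rank $1 + 2(n-2) + 1 = 2n-2$, matching the $1 + (n-1) + (n-2) = 2n-2$ free parameters of $U_n(R)$.

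The main obstacle will be in part (3): one must check that the staggered alternating pattern of $U_n(R)$ is reproduced \emph{exactly} by alternating words in the chosen $X$ and $Y$, and that no unintended relations are imposed by the matrix algebra beyond those already present in $C_n(R)$. Parts (1) and (2) are comparatively routine once the relations are verified, since in each case bijectivity reduces to rank-counting of free $R$-modules together with the observation that the image of the monomial basis realizes the full set of free matrix parameters.
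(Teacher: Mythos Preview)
Your proposal is correct and takes essentially the same approach as the paper: in each case the paper also writes down an explicit $R$-algebra isomorphism (displaying the map $\varphi$, $\psi$, resp.\ $\phi$ on a general element and leaving the verification that it is a ring isomorphism to the reader), which is exactly the map determined by your choice of generators $X,Y$ via the universal property. Your version is in fact more informative, since you supply the relation checks and the rank counts that the paper omits; in particular your parity analysis in part~(3) pins down precisely why the length-$(n-1)$ alternating word beginning with $x$ vanishes while the one beginning with $y$ survives, which is the only nonobvious point.
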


\begin{proof}
(1) We write $$f = a+ \sum_{i=1}^{n-1}b_ix^i + \sum_{j=1}^{m-1}c_jy^j \in A_{n,m}(R)$$ and define $\varphi:  A_{n,m}(R) \to  T_{n,m}(R)$ as
$$\varphi(f)=
\left(\begin{array}{@{}c|c@{}}
  \begin{matrix}
  a & b_1 & b_2 & \cdots & b_{n-1} \\
  0 & a & b_1 & \cdots & b_{n-2} \\
  0 & 0 & a & \cdots & b_{n-3} \\
  \vdots & \vdots & \vdots & \ddots & \vdots \\
  0 & 0 & 0 & \cdots & a
  \end{matrix}
  & \bigzero \\
\hline
  \bigzero &
  \begin{matrix}
  a & c_1 & c_2 & \cdots & c_{m-1} \\
  0 & a & c_1 & \cdots & c_{m-2} \\
  0 & 0 & a & \cdots & c_{m-3} \\
  \vdots & \vdots & \vdots & \ddots & \vdots \\
  0 & 0 & 0 & \cdots & a
  \end{matrix}
\end{array}\right).$$
It plainly can be shown via some routine technical tricks that $\varphi$ is a ring isomorphism, which fact we leave for a direct inspection by the interested reader.

(2) We write $f \in B_{n,m}(R)$ such that
\begin{align*}
    f &=  a_{00}y^0x^0 + a_{01}y^0x^1 + \cdots + a_{0,n-1}y^0x^{n-1}  \\
      &+  a_{10}y^1x^0 + a_{11}y^1x^1 + \cdots + a_{1,n-1}y^1x^{n-1}   \\
      & \qquad \vdots   \qquad \qquad \vdots  \qquad \qquad \qquad \qquad  \vdots   \\
      &+ a_{m-1,0}y^{m-1}x^0 + a_{m-1,1}y^{m-1}x^1 + \cdots + a_{m-1,n-1}y^{m-1}x^{n-1}    \\
\end{align*}
and define $\psi :  B_{n,m}(R) \to  S_{n,m}(R)$ as
$$\psi(f)=
 \begin{pmatrix}
   a_{00} & a_{10} & \cdots & a_{m-1,0} & a_{m-1,1} & \cdots & a_{m-1,n-1}\\
   \vdots  & \ddots & \ddots & \vdots & \vdots & \ddots & \vdots \\
   0 & \cdots & a_{00} & a_{10} & a_{11} & \cdots & a_{1,n-1} \\
   0 & \cdots & 0 & a_{00} & a_{01} & \cdots & a_{0,n-1} \\
   \vdots  & \ddots & \ddots & \vdots & \vdots & \ddots & \vdots \\
   0 & \cdots & 0 & 0  & \cdots & a_{00} & a_{0,1} \\
   0 & \cdots & 0 & 0  & \cdots & 0 & a_{00}
\end{pmatrix}.$$

It routinely follows that the so defined map is a ring isomorphism, which verification we omit to be done by the interested reader.

(3) We write the coefficients as follows
$$f= \sum_{0\le i_j \le 1\atop 1 \le j \le n-1}d_{(i_1, \dots , i_{n-1})}\underbrace{y^{i_1}x^{i_2}y^{i_3}x^{i_4}...}_{\text{$n-1$ words}}\in C_{n}(R)$$

and define $\phi :  C_{n}(R) \to  S_{n,m}(R)$ as
$$\phi(f)=
\begin{pmatrix}
   d_{(0,0,0, \dots,0)} & d_{(1,0,0, \dots,0)} & d_{(1,1,0, \dots,0)} & d_{(1,1,1, \dots,0)} & \cdots & d_{(1,1,1, \dots,1)}\\
   0 & d_{(0,0,0, \dots,0)} & d_{(0,1,0, \dots,0)} & d_{(0,1,1, \dots,0)} & \cdots & d_{(0,1,1, \dots,1)} \\
   0 & 0 & d_{(0,0,0, \dots,0)} & d_{(1,0,0, \dots,0)} & \cdots & d_{(1,\dots,1,0,0)}\\
   0 & 0 & 0 & d_{(0,0,0, \dots,0)} & \cdots & d_{(0,1,\dots,1,0,0)} \\
   \vdots  & \vdots & \vdots & \vdots & \ddots & \vdots \\
   0 &  0 & 0  & 0  & \cdots & d_{(0,0,0, \dots,0)}
\end{pmatrix}.$$
A simple game with matrices is only needed in order to check that the so defined map is an isomorphism of rings, which technical manipulation we leave to the interested reader.
\end{proof}

\begin{example}
Let $R$ be a ring, then the following three items hold:

(1) $R\left[ x,y | x^2=xy=y^2=0  \right] \cong \left\{
\begin{pmatrix}
    a_1 & a_2 & 0 & 0 \\
    0 & a_1 & 0 & 0 \\
    0 & 0 & a_1 & a_3 \\
    0 & 0 & 0 & a_1
\end{pmatrix} : a_i \in R \right\}
$

(2) $R\left\langle x,y | x^2=xy=y^2=0  \right\rangle \cong \left\{
\begin{pmatrix}
    a_1 & a_2 & a_3 \\
    0 & a_1 & a_4 \\
    0 & 0 & a_1
\end{pmatrix} : a_i \in R \right\} $

(3) $R\left\langle x,y | x^2=xyx=y^2=0  \right\rangle \cong \left\{
\begin{pmatrix}
    a_1 & a_2 & a_3 & a_4 \\
    0 & a_1 & a_5 & a_6 \\
    0 & 0 & a_1 & a_2 \\
    0 & 0 & 0 & a_1
\end{pmatrix} : a_i \in R \right\} \cong T(T(R,R), M_2(R))$
\end{example}

We now define, respectively, the following corresponding sets for the subsets $I$, $J$ and $K$ of the rings $A_{n,m}(R)$, $B_{n,m}(R)$ and $C_{n}(R)$ thus:
\begin{align*}
    &\nabla_I(A_{n,m}(R))= \left\{ a+ \sum_{i=1}^{n-1}b_ix^i + \sum_{j=1}^{m-1}c_jy^j \in A_{n,m}(R) \mid a \in I \right\}, \\
    &\nabla_J(B_{n,m}(R))= \left\{ \sum_{i=0}^{m-1} \sum_{j=0}^{n-1} a_{ij}y^ix^j \in B_{n,m}(R) \mid a_{00} \in J \right\}, \\
    &\nabla_K(C_{n}(R)) = \left\{ a + C_{n}(R)x +C_{n}y \in C_{n}(R) \mid a \in K \right\}.
\end{align*}

Next, Lemma \ref{section 3 lemma} helps us to illustrate the truthfulness of the following consequence.

\begin{corollary} \label{unit}
Let $R$ be a ring. Then, the next three statements are valid:

(1) $U(A_{n,m}(R))=\nabla_{U(R)}(A_{n,m}(R)),$

(2) $U(B_{n,m}(R))=\nabla_{U(R)}(B_{n,m}(R)),$

(3) $U(C_{n}(R))=\nabla_{U(R)}(C_{n}(R)).$

\end{corollary}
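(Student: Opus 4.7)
The plan is to treat the three rings uniformly by exploiting the matrix representations from Lemma~\ref{section 3 lemma}. In each case, the natural map $\pi: S \to R$ sending both $x$ and $y$ to $0$ --- equivalently, extracting the constant element $a$ which appears along the diagonal in the matrix picture --- is a surjective ring homomorphism whose kernel is the two-sided ideal $\langle x, y\rangle$ of $S$. Unwinding $\pi^{-1}(U(R))$ directly from each presentation recovers exactly $\nabla_{U(R)}(A_{n,m}(R))$, $\nabla_{U(R)}(B_{n,m}(R))$, $\nabla_{U(R)}(C_{n}(R))$, so the corollary will follow once one shows $U(S) = \pi^{-1}(U(R))$.

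The second step is to prove that $\ker\pi$ is a \emph{nilpotent} ideal. Here the isomorphisms of Lemma~\ref{section 3 lemma} earn their keep: under each of the isomorphisms $\varphi, \psi, \phi$, the ideal $\ker\pi$ maps into the set of strictly upper-triangular matrices inside the ambient $T_k(R)$ (with $k = n+m$ for $T_{n,m}(R)$, $k = n+m-1$ for $S_{n,m}(R)$, and $k=n$ for $U_n(R)$). Since strictly upper-triangular matrices in $T_k(R)$ satisfy $N^k = 0$, the same nilpotency bound transfers to $\ker\pi$ in each of our rings. In particular, $\ker\pi \subseteq J(S)$.

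The third step is the unit-lifting conclusion. Because $\ker\pi$ is nilpotent, an element $f \in S$ is a unit if and only if $\pi(f) \in U(R)$, by the standard fact that units lift modulo nil ideals. Writing this out on the level of presentations gives precisely the three asserted equalities $U(S) = \nabla_{U(R)}(S)$.

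The main obstacle will be the verification that $\ker\pi$ actually lands inside the strictly upper-triangular part under each isomorphism. For $A_{n,m}(R)$ this is transparent from the block-diagonal shape of $T_{n,m}(R)$: removing the constant $a$ leaves entries only strictly above the diagonals of the two blocks. For $B_{n,m}(R)$ it requires observing that every monomial $y^{i}x^{j}$ with $(i,j)\ne (0,0)$ sits on some super-diagonal of $S_{n,m}(R)$, so $\psi$ sends $\ker\pi$ into the nilpotent part. For $C_{n}(R)$ the bookkeeping is slightly more delicate because the super-diagonals of $U_{n}(R)$ alternate between two sequences of parameters; however, each alternating word $\underbrace{y^{i_{1}}x^{i_{2}}\cdots}_{n-1}$ of positive length contributes only to a proper super-diagonal of $\phi(f)$, which suffices. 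Once this matrix-entry bookkeeping is settled, the nilpotency bound and the standard unit-lift finish all three cases at once.
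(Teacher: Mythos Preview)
Your proposal is correct and follows the same route the paper indicates: the paper offers no proof beyond pointing to Lemma~\ref{section 3 lemma}, and your argument simply spells out why that lemma suffices, via the strictly upper-triangular/nilpotent kernel observation and standard unit-lifting modulo a nil ideal. Nothing is missing.
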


We close our work with the following assertion whose proof is removed being almost obvious bearing in mind the quoted above claims.

\begin{lemma}
Let $R$ be a ring. Then, the following three points are true:

(1) $A_{n,m}(R)$ is a UQ if, and only if, $R$ is a UQ ring.

(2) $B_{n,m}(R)$ is a UQ if, and only if, $R$ is a UQ ring.

(3) $C_{n}(R)$ is a UQ  if, and only if, $R$ is a UQ ring.
\end{lemma}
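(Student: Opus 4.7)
The plan is to prove both directions uniformly across the three rings, leveraging the matrix models from Lemma \ref{section 3 lemma} and the unit description from Corollary \ref{unit}. Throughout, each of the rings $A_{n,m}(R) \cong T_{n,m}(R)$, $B_{n,m}(R) \cong S_{n,m}(R)$, and $C_n(R) \cong U_n(R)$ has the following common structural features: every element has a constant ``scalar'' diagonal entry $a \in R$ plus a strictly upper-triangular remainder, and two elements commute only if their scalar parts commute in $R$.

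For the direction ``the big ring is UQ $\Rightarrow$ $R$ is UQ,'' I would observe that $R$ embeds into each ring via the constant map $a \mapsto a \cdot 1$ (equivalently, $a \mapsto aI$ in the matrix model). Corollary \ref{unit} says that a matrix is a unit precisely when its constant diagonal entry is a unit in $R$, so this inclusion satisfies $U(\text{big ring}) \cap R = U(R)$; that is, $R$ is a good subring. The UQ property then descends by Lemma \ref{subring}.

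For the converse ``$R$ is UQ $\Rightarrow$ the big ring is UQ,'' I would work uniformly in the matrix model. Pick a unit $M$ in, say, $T_{n,m}(R)$ and write $M = aI + N$ with $N$ strictly upper triangular (within the allowed Toeplitz/block pattern); by Corollary \ref{unit}, $a \in U(R)$. Since $R$ is UQ, $a - 1 \in QN(R)$. To show $M - I = (a-1)I + N \in QN(T_{n,m}(R))$, take any $M' = bI + N'$ in $T_{n,m}(R)$ with $(M-I)M' = M'(M-I)$; comparing diagonal entries gives $(a-1)b = b(a-1)$, so $ab = ba$. Because $a - 1 \in QN(R)$ commutes with $b$, the element $1 - (a-1)b$ is a unit in $R$. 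Now $I - (M-I)M'$ lies in $T_{n,m}(R)$ and has constant diagonal $1 - (a-1)b \in U(R)$, so Corollary \ref{unit} at once yields $I - (M-I)M' \in U(T_{n,m}(R))$. Thus $M - I \in QN(T_{n,m}(R))$, as wanted.

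The argument transports verbatim to $S_{n,m}(R)$ and $U_n(R)$ because each of these rings shares the two decisive features used above: (i) every element has a constant scalar $a$ along the diagonal with a strictly-upper remainder, and (ii) matrix commutation forces the diagonal scalars to commute in $R$. The only mildly delicate point, which I see as the main (minor) obstacle, is to confirm that in each ring the quantity $I - (M-I)M'$ indeed lies in the ring (not merely in the ambient $T_{n+m-1}(R)$ or $T_n(R)$), so that the unit criterion of Corollary \ref{unit} is directly applicable. This follows in each case because $T_{n,m}(R)$, $S_{n,m}(R)$, and $U_n(R)$ are subrings (closed under addition and multiplication) with identity $I$, and the diagonal and the shape of the strictly-upper part are preserved under the relevant operations, as can be read off from the explicit isomorphisms of Lemma \ref{section 3 lemma}.
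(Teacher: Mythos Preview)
Your proposal is correct and aligns with the paper's intended approach, which omits the proof as ``almost obvious bearing in mind the quoted above claims'' (namely Lemma~\ref{section 3 lemma} and Corollary~\ref{unit}). Your forward direction via the good-subring embedding $R\hookrightarrow aI$ is exactly right, and your direct verification of quasinilpotence for the converse is clean and valid: reading off the constant diagonal from a commutation identity in an upper-triangular ring with scalar diagonal is unproblematic, and Corollary~\ref{unit} then closes the argument.

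A slightly shorter route you might also consider, and which is likely what the authors had in mind by ``obvious,'' is to note that each of $T_{n,m}(R)$, $S_{n,m}(R)$, $U_n(R)$ is a good subring of the full upper-triangular ring $T_k(R)$ for the appropriate $k$ (the unit criterion in $T_k(R)$ is ``all diagonal entries invertible,'' which for a constant-diagonal matrix reduces exactly to Corollary~\ref{unit}). Then Corollary~\ref{cor five}(3) gives $T_k(R)$ UQ $\Leftrightarrow$ $R$ UQ, and Lemma~\ref{subring} transfers UQ down to the good subring. This avoids re-verifying quasinilpotence by hand, but your explicit computation has the advantage of being self-contained and making the role of the constant diagonal completely transparent.
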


\vskip3.0pc

\end{document}